\newtheorem{remark}{\textit{Remark}}[section]
\newtheorem{lemma}{\textit{Lemma}}[section]
\newtheorem{theorem}{\textit{Theorem}}[section]
\def\b{\boldsymbol}
\def\note#1{ {\sffamily \textcolor{blue}{#1}} }
\def\Reg{\text{Reg}}
\def\Sm{\textrm{Sm}_\Theta}
\def\D{\textrm{D}}
\begin{document}

\begin{frontmatter}

\title{Tridiagonalization of systems of coupled linear differential equations with variable coefficients by a Lanczos-like method.}
% Lanczos-like method for solving systems of coupled linear differential equations: breakdowns and convergence.

\author[A1]{Pierre-Louis Giscard}
\address[A1]{Universit\'e du Littoral C\^{o}te d'Opale, EA2597-LMPA-Laboratoire de Math\'ematiques Pures et Appliqu\'ees Joseph Liouville, Calais, France. Email: giscard@univ-littoral.fr}

\author[A2]{Stefano Pozza}
\address[A2]{Faculty of Mathematics and Physics, Charles University, Sokolovsk\'a 83, 186 75 Praha 8, Czech Republic. Email: pozza@karlin.mff.cuni.cz. Associated member of ISTI-CNR, Pisa, Italy, and member of INdAM-GNCS group, Italy.}

\begin{abstract}
We show constructively that, under certain regularity assumptions, any system of coupled linear differential equations with variable coefficients can be tridiagonalized by a time-dependent Lanczos-like method. The proof we present formally establishes the convergence of the Lanczos-like algorithm and yields a full characterization of algorithmic breakdowns. From there, the solution of the original differential system is available in closed form. This is a key piece in evaluating the elusive ordered exponential function both formally and numerically.
\end{abstract}

\begin{keyword}
Tridiagonalization \sep matrix differential equations \sep Lanczos algorithm \sep Time-ordered exponential \sep Tridiagonal matrices \sep distributions
\MSC 34A25 \sep 47B36
\end{keyword}

\end{frontmatter}

\section{Introduction}
\subsection{Background}
Systems of coupled linear differential equations with non-constant coefficients naturally arise in a variety of contexts in mathematics \cite{Reid63,kwaSiv72,Corless2003,Blanes15,BenEtAll17} and beyond, from engineering to quantum physics \cite{kuvcera73,Abou2003,hached2018,KirSim19,Autler1955,Shirley1965,Lauder1986,Xie2010}. Yet, determining the solutions of such systems both formally and numerically remains surprisingly difficult, their widespread applicability making these difficulties only more pressing. 

The issue is best presented in the language of linear algebra, and so we consider here an $N\times N$ matrix $\mathsf{A}(t')$ depending on the real-time variable $t'\in I\subseteq \mathbb{R}$ encoding the non-constant coupling coefficients of the linear differential system. In this framework, the unique solution $\mathsf{U}(t',t)$ of the system of coupled linear differential equations with non-constant coefficients 
\begin{equation}\label{FundamentalSystem}
\mathsf{A}(t') \mathsf{U}(t',t)=\frac{d}{dt'}\mathsf{U}(t',t), \quad \mathsf{U}(t,t)=\mathsf{Id},\text{ for all }t\in I,
\end{equation}
with $t\leq t' \in I$ and $\mathsf{Id}$ the identity matrix, is called the \emph{time-ordered exponential} of $\mathsf{A}(t')$. Under the assumption that $\mathsf{A}$ commutes with itself at all times, i.e., $\mathsf{A}(\tau_1)\mathsf{A}(\tau_2)-\mathsf{A}(\tau_2)\mathsf{A}(\tau_1)=\b{0}$ for all $\tau_1,\tau_2 \in I$, then the time-ordered exponential is an ordinary matrix exponential $\mathsf{U}(t',t)=\exp\left(\int_t^{t'} \mathsf{A}(\tau)\, \text{d}\tau\right)$. In general, however, $\mathsf{U}$ has no known explicit form in terms of $\mathsf{A}$ and is usually denoted $\mathcal{T}\exp\left(\int_t^{t'} \mathsf{A}(\tau)\, \text{d}\tau\right)$. Determining this object explicitly not only means solving systems of coupled ODEs with variable coefficients but,  remarkably, a general strategy would also provide formal solutions to systems of coupled linear and non-linear partial differential equations \cite{kosovtsov2002introduction,kosovtsov2004chronological,kosovtsov2009formal}.  

In the context of ODEs, only three methods have been devised to calculate ordered exponentials analytically, only one of which is guaranteed to produce an exact answer in a finite number of steps. These are: the Floquet approach, applicable when $\mathsf{A}(t')$ is periodic and which produces an infinite perturbative expansion of the solution\footnote{That is, a series in terms of powers of a parameter that should be small to guarantee convergence.} usually too complicated to be evaluated beyond its first or second terms \cite{Blanes2009}; the Magnus series expansion \cite{Magnus1954}, which presents the solution as the matrix exponential of an increasingly intricate infinite series of nested commutators plagued by incurable divergence issues \footnote{That is its convergence domain is incurably small (even if not so restrictively); see \cite{Casas07} and also \cite{Feldman1984,Maricq1987,IseAl2000,Sanchez2011}. In spite of this, Magnus series are very much in use nowadays in quantum physics \cite{Blanes2009}, because they guarantee the unitary of the approximated solutions even when the series diverges (!).}; and the path-sum approach, which expresses the solution exactly as a continued fraction of finite depth \cite{Giscard2015,GiscardBonhomme2019} but requires solving an NP-hard problem \cite{GiscardPozza2019,Flum2004}.

Recently P.-L. G. and S. P. proposed a \emph{constructive} method to tridiagonalize systems of linear differential equations with non-constant coefficients \cite{GiscardPozza2019,ProceedingsPaper2019}, from which one can easily evaluate $\b{w}^{H}\mathsf{U}(t',t)\b{v}$ for any two vectors $\b{w}, \b{v}$ with $\b{w}^{H}\b{v}=1$. Here $\b{w}^{H}$ denotes the Hermitian transpose of $\b{w}$. Under the assumptions that the coefficients of the tridiagonalized system are ``well-behaved'' distributions (in a sense to be made precise below) and that the method does not breakdown, this approach -- a Lanczos-like algorithm -- is able to produce the tridiagonalization. The purpose of the present work is to prove that such assumptions hold. That is, we establish that it is indeed possible to tridiagonalize a system of coupled linear differential equations with variable coefficients using a time-dependent Lanczos approach, provided the matrix $\mathsf{A}(t')$ is composed of \emph{smooth} functions of $t'$ and there exists at least one $\rho \in I$ so that the matrix $\mathsf{A}(\rho)$ is tridiagonalizable in the usual sense.
At the heart of the strategy employed is a non-commutative convolution-like product, denoted by $\ast$, defined between certain distributions. We therefore begin by recalling the definition and properties of the product utilized before stating and giving the proof on the tridiagonalization.% of systems of coupled linear differential equations with variable coefficients. 

\subsection{$\ast$-Product}\label{ProdDef}

Let $t$ and $t'$ be two real variables. We consider the class $\D$ of all distributions which are linear superpositions of Heaviside theta functions and Dirac delta derivatives with smooth coefficients. That is, a distribution $d$ is in $\D$ if and only if it can be written as
$$
d(t',t)=\widetilde{d}(t',t)\Theta(t'-t) + \sum_{i=0}^N \widetilde{d}_i(t',t)\delta^{(i)}(t'-t),  
$$
where $N\in \mathbb{N}$ is finite, $\Theta(\cdot)$ stands for the Heaviside theta function (with the convention $\Theta(0)=1$) and $\delta^{(i)}(\cdot)$ is the $i$th derivative of the Dirac delta distribution $\delta\equiv \delta^{(0)}$. Here and from now on, the tilde on $\widetilde{f}$ indicates that $\widetilde{f}(t',t)$ is an ordinary \emph{smooth} function in both variables. 
Note that we consider we consider distributions as Schwartz did \cite{schwartz1952,schwartz1978}, i.e., $f \in \D$ should be interpreted as a linear functional on a set of test functions.

We can endow the class $\D$ with a non-commutative algebraic structure upon defining a product between its elements. For $f_1, f_2
\in \D$ we define the convolution-like $\ast$ product between $f_1(t',t)$ and $f_2(t',t)$ as
\begin{equation}\label{eq:def:*}
  \big(f_2 * f_1\big)(t',t) := \int_{-\infty}^{\infty} f_2(t',\tau) f_1(\tau, t) \, \text{d}\tau.
\end{equation}
From this definition, we find that the $\ast$-product is associative over $\D$, and that the identity element with respect to the $\ast$-product is the Dirac delta distribution, $1_\ast:=\delta(t'-t)$. 
When $f(t',t) = f(t'-t)$ has bounded supporting set, the $\ast$-product $f \ast g$ (and $g \ast f$) is the convolution for distributions defined by Schwartz; see \cite[\S~11]{schwartz1952} and \cite[Chapter~VI]{schwartz1978}. Given that $\delta^{(i)}(t'-t)$ has bounded supporting set, for every $f \in \D$, the $\ast$-product $\delta^{(i)}(t'-t) \ast f$ and $f \ast \delta^{(i)}(t'-t)$ are well-defined and are both elements of $\D$; see \cite{GiscardPozza2019,GisPozInv19} for further details.
Observe that the $*$-product is not, in general, a convolution but may be so when both $f_1(t',t)$ and $f_2(t',t)$ depend only on the difference $t'-t$.\\[-.7em]

As a case of special interest here, we shall also consider the subclass $\Sm$ of $\D$ comprising those distributions which are piecewise smooth, i.e., of the form
\begin{equation}\label{PSmForm}
f(t',t)=\widetilde{f}(t',t)\Theta(t'-t).
\end{equation}
For $f_1,\,f_2\in\Sm$, the $\ast$-product between $f_1,f_2$ simplifies to
\begin{align*}
  \big(f_2 * f_1\big)(t',t) &= \int_{-\infty}^{\infty} \widetilde{f}_2(t',\tau) \widetilde{f}_1(\tau, t)\Theta(t'-\tau)\Theta(\tau-t) \, \text{d}\tau,\\ &=\Theta(t'-t)\int_t^{t'} \widetilde{f}_2(t',\tau) \widetilde{f}_1(\tau, t) \, \text{d}\tau,
\end{align*}
 which makes calculations involving such functions easier to carry out and shows that $\Sm$ is closed under  $\ast$-multiplication. 
 With the previous arguments, this demonstrates that $\D$ is closed under $\ast$-multiplication. As a consequence, given $f \in \D$ we can define the $k$th \emph{$\ast$-power} $f^{\ast k}$ as the $k$ $\ast$-products $f \ast f \ast \dots \ast f$ ($f^{\ast 0} = \delta(t'-t)$ by convention) \cite{GiscardPozza2019}.
 On $\Sm$ the $\ast$-product reduces to the so-called Volterra composition, a product between smooth functions of two-variables introduced by Volterra and Pérès \cite{Volterra1928}.

The $\ast$-product extends directly to distributions for which the smooth coefficients are \emph{matrices} of smooth coefficients by using the ordinary matrix product between the integrands in \eqref{eq:def:*}
 (see \cite{GiscardPozza2019} for more details). It is also well defined for distributions of $\D$ whose smooth coefficients depend on less than two variables. Indeed, consider a generalized function $f_3(t',t)=\widetilde{f}_3(t')\delta^{(i)}(t'-t)$ with $i\geq -1$ and $\delta^{(-1)}\equiv \Theta$. Then
\begin{align*}
\big(f_3 \ast f_1\big)(t',t)&= \widetilde{f}_3(t')\int_{-\infty}^{+\infty}  \delta^{(i)}(t'-\tau)f_1(\tau, t) \, \text{d}\tau,\\
\big(f_1\ast f_3\big)(t',t)&=\int_{-\infty}^{+\infty}  f_1(t',\tau)\widetilde{f}_3(\tau)\delta^{(i)}(\tau-t) \, \text{d}\tau.
\end{align*}
where $f_1(t',t)$ is defined as before.
Hence the variable of $\widetilde{f}_3(t')$ is treated as the left variable of a smooth function of two variables. This observation extends straightforwardly should $\widetilde{f}_3$ be constant and, by linearity, to any distribution of $\D$.

\subsection{Tridiagonalization: $\ast$-Lanczos algorithm}
 Let $\mathsf{A}(t',t):=\widetilde{\mathsf{A}}(t')\Theta(t'-t)$ with $\widetilde{\mathsf{A}}(t')$ a $N\times N$ time-dependent matrix.
 The $k$th matrix $\ast$-power of $\mathsf{A}$ is denoted by $\mathsf{A}^{*k}$.
As shown in \cite{Giscard2015}, if all entries $\widetilde{\mathsf{A}}(t')_{ij}$ are bounded over $I$, then the related time-ordered exponential $\mathsf{U}(t',t)$ can be expressed as
\begin{equation}\label{OrderedExp}
\mathsf{U}(t',t)=\Theta(t'-t) \ast \mathsf{R}_{\ast}(\mathsf{A})(t',t).
\end{equation}
Here $\mathsf{R}_{\ast}(\mathsf{A})$ is the $\ast$-resolvent of $\mathsf{A}$, defined as
\begin{equation}\label{RseriesA}
\mathsf{R}_{\ast}(\mathsf{A}):=\big(\mathsf{Id}\,1_\ast-\mathsf{A}\big)^{\ast-1}=\mathsf{Id}\,1_\ast+\sum_{k\geq 1}\mathsf{A}^{\ast k},
\end{equation}
the series on the right-hand side converging when $\widetilde{\mathsf{A}}$ elements are bounded.

\begin{table}
     \noindent\fbox{
\parbox{0.95\textwidth}{\begin{center}
\parbox{0.9\textwidth}{
\bigskip

 \noindent \underline{Input:} A complex time-dependent matrix $\mathsf{A}$, and
                  complex vectors $\b{w},\b{v}$ such that $\b{w}^H\b{v} = 1$.

 \noindent \underline{Output:} Coefficients $\alpha_0,\cdots,\, \alpha_{n-1}$ and $\beta_1,\cdots,\, \beta_{n-1}$ defining the matrix $\mathsf{T}_n$ of Eq.~(\ref{eq:tridiag}) which satisfies Eq.~(\ref{AjTjResult}).
  \begin{align*}
  & \textrm{Initialize: } \b{v}_{-1}=\b{w}_{-1}=0, \, \b{v}_0 = \b{v} \, 1_*, \, \b{w}_0^H = \b{w}^H 1_*. \\
  &    \alpha_0 = \b{w}^H \mathsf{A} \, \b{v}, \\
  &    \b{w}_1^H = \b{w}^H \mathsf{A} -  \alpha_{0}\, \b{w}^H,\\
  &    \b{\widehat v}_{1} = \mathsf{A}\, \b{v} - \b{v}\,\alpha_{0}, \\
  &    \beta_1 = \b{w}^H \mathsf{A}^{*2} \, \b{v} - \alpha_0^{*2}, \\
  &    \qquad\textrm{If } \beta_1 \textrm{ is not $*$-invertible, then stop, otherwise}, \\
  %&    \beta_1^{*-1} = \alpha_0^{*-1}* \big(\alpha_0* \b{w}^H \mathsf{A}^{*2} \b{v}*\alpha_0 - 1_*\big)^{*-1}*\alpha_0^{*-1},\\
  &     \b{v}_1 = \b{\widehat v}_1 * \beta_1^{*-1}, \\[1em]
  & \textrm{For } n=2,\dots \\ 
  &    \qquad \quad \alpha_{n-1} = \b{w}_{n-1}^H * \mathsf{A} * \b{v}_{n-1}, \\
  &    \qquad \quad \b{w}_{n}^H = \b{w}_{n-1}^H * \mathsf{A} -  \alpha_{n-1} * \b{w}_{n-1}^H - \beta_{n-1}*\b{w}_{n-2}^H, \\
  &    \qquad \quad \b{\widehat v}_{n} = \mathsf{A} * \b{v}_{n-1} - \b{v}_{n-1}*\alpha_{n-1} - \b{v}_{n-2}, \\
  &     \qquad \quad \beta_n = \b{w}_{n}^H * \mathsf{A} * \b{v}_{n-1}, \\
  &     \qquad \qquad\quad \textrm{If } \beta_n \textrm{ is not $*$-invertible, then stop, otherwise}, \\
 % &     \qquad \quad \gamma_n = (\b{w}_n^H + \b{w}_{n-2}^H)* \mathsf{A} * \b{v}_{n-1},\\
 % &     \qquad \quad \beta_n^{*-1} = (\gamma_n - 1_*)^{*-1}, \\
  &     \qquad \quad  \b{v}_n = \b{\widehat v}_n * \beta_n^{*-1}, \\
  & \textrm{end}.
  \end{align*}
%\end{algorithm}
  }\end{center}}}
  \caption{The $*$-Lanczos Algorithm of \cite{GiscardPozza2019}.}\label{algo:*lan}
  \end{table} 
 
 ~
 
 Now we can recall results in \cite{GiscardPozza2019}: baring breakdowns--which we will characterize below--the $\ast$-Lanczos algorithm reproduced here in Table \ref{algo:*lan} produces a sequence of tridiagonal matrices $\mathsf{T}_n$, $1\leq n\leq N$, of the form
\begin{equation}\label{eq:tridiag}
    \mathsf{T}_n := \begin{bmatrix} 
            \alpha_0 & \delta &          & \\ 
            \beta_1  & \alpha_1 & \ddots & \\ 
                     & \ddots   & \ddots & \delta \\
                     &           & \beta_{n-1} & \alpha_{n-1}
          \end{bmatrix},
\end{equation}
and such that the matching $\ast$-moment property is achieved:
\begin{theorem}[\cite{GiscardPozza2019}]\label{thm:mmp}
   Let $\mathsf{A},\b{w},\b{v}$ and $\mathsf{T}_n$ be as described above, then
   \begin{equation}\label{AjTjResult}
        \b{w}^H (\mathsf{A}^{*j})\, \b{v} = \b{e}_1^H (\mathsf{T}_n^{*j})\,\b{e}_1, \quad \text{ for } \quad j=0,\dots, 2n-1.
   \end{equation}
\end{theorem}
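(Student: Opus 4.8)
The plan is to adapt the classical argument that nonsymmetric Lanczos matches the first $2n-1$ moments, but to run it inside the noncommutative unital ring $(\D,\ast)$ --- in which only some elements are $\ast$-invertible --- rather than over a field. As a preliminary step I would collect the Lanczos vectors into arrays over $\D$: let $\mathsf{V}_n$ have $j$-th column $\b{v}_{j-1}$, let $\mathsf{W}_n^H$ have $i$-th row $\b{w}_{i-1}^H$, let $\b{e}_j$ denote the $j$-th standard basis vector of $\mathbb{C}^n$, and recall that $\b{v}$, $\b{w}$ enter as $\b{v}_0=\b{v}\,1_\ast$ and $\b{w}_0^H=\b{w}^H1_\ast$. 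Using only associativity of $\ast$ and that $1_\ast=\delta(t'-t)$ is the two-sided $\ast$-identity, the three-term recurrences of Table~\ref{algo:*lan} (the listed $n=0,1$ lines being the special cases of those recurrences with $\b{v}_{-1}=\b{w}_{-1}=0$) collapse to
\[
  \mathsf{A}\ast\mathsf{V}_n=\mathsf{V}_n\ast\mathsf{T}_n+\widehat{\b{v}}_n\ast\b{e}_n^H,\qquad
  \mathsf{W}_n^H\ast\mathsf{A}=\mathsf{T}_n\ast\mathsf{W}_n^H+\b{e}_n\ast\b{w}_n^H,
\]
where $\widehat{\b{v}}_n=\b{v}_n\ast\beta_n$.

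Second --- and this is the technical heart --- I would prove by induction on $n$, up to the first breakdown index, the biorthogonality package
\[
  \b{w}_i^H\ast\b{v}_j=\delta_{ij}\,1_\ast\ \ (0\le i,j\le n-1),\qquad
  \b{w}_i^H\ast\widehat{\b{v}}_n=0\ \ (0\le i\le n-1),\qquad \b{w}_n^H\ast\b{v}_i=0\ \ (0\le i\le n-1).
\]
The base case $n=1$ is exactly the hypothesis $\b{w}^H\b{v}=1$. For the inductive step one substitutes the defining recurrences for $\b{w}_n^H$ and for $\b{v}_n=\widehat{\b{v}}_n\ast\beta_n^{\ast-1}$, rewrites each occurring $\b{w}_i^H\ast\mathsf{A}$ or $\mathsf{A}\ast\b{v}_j$ by a lower-index recurrence, and collapses the outcome using the inductive hypothesis and the definitions $\alpha_{n-1}=\b{w}_{n-1}^H\ast\mathsf{A}\ast\b{v}_{n-1}$, $\beta_n=\b{w}_n^H\ast\mathsf{A}\ast\b{v}_{n-1}$; in particular $\b{w}_n^H\ast\widehat{\b{v}}_n=\beta_n$, so normalizing by the $\ast$-invertible $\beta_n$ --- whose invertibility is precisely the non-breakdown condition the algorithm tests --- yields $\b{w}_n^H\ast\b{v}_n=\beta_n\ast\beta_n^{\ast-1}=1_\ast$. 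The only delicacy is bookkeeping: because $\ast$ is noncommutative, every scalar factor must be kept on its correct side and every inversion must be seen to involve a genuinely $\ast$-invertible element.

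Third, using that $\mathsf{T}_n$ is tridiagonal with every superdiagonal entry equal to $1_\ast$, I would establish, for $0\le j\le n-1$, the $\ast$-Krylov identities
\[
  \mathsf{A}^{\ast j}\ast\b{v}=\mathsf{V}_n\ast(\mathsf{T}_n^{\ast j})\ast\b{e}_1,\qquad
  \b{w}^H\ast\mathsf{A}^{\ast j}=\b{e}_1^H\ast(\mathsf{T}_n^{\ast j})\ast\mathsf{W}_n^H ,
\]
again by induction: applying $\mathsf{A}$ and invoking the matrix recurrences of the first step, the ``overflow'' at stage $j$ is $\widehat{\b{v}}_n$ times the scalar $(\mathsf{T}_n^{\ast j})_{n,1}$ (respectively $\b{w}_n^H$ times $(\mathsf{T}_n^{\ast j})_{1,n}$), which vanishes for $j\le n-2$ because $\mathsf{T}_n$ is banded and no walk of length $j$ joins index $n$ to index $1$. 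Carrying the induction one further step gives $\mathsf{A}^{\ast n}\ast\b{v}=\mathsf{V}_n\ast(\mathsf{T}_n^{\ast n})\ast\b{e}_1+\widehat{\b{v}}_n\ast\pi_n$ for some $\pi_n\in\D$, and $\b{w}^H\ast\mathsf{A}^{\ast n}=\b{e}_1^H\ast(\mathsf{T}_n^{\ast n})\ast\mathsf{W}_n^H+\b{w}_n^H$, the last equality because $(\mathsf{T}_n^{\ast(n-1)})_{1,n}$ is a $\ast$-product of $n-1$ superdiagonal entries and hence equals $1_\ast$.

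Finally I would assemble \eqref{AjTjResult}. The biorthogonality package gives $\mathsf{W}_n^H\ast\mathsf{V}_n=\mathsf{Id}_n\,1_\ast$. For $0\le j\le 2n-2$, write $j=i+j'$ with $0\le i,j'\le n-1$; then the $\ast$-Krylov identities yield
\[
  \b{w}^H(\mathsf{A}^{\ast j})\b{v}=\b{e}_1^H\ast(\mathsf{T}_n^{\ast i})\ast(\mathsf{W}_n^H\ast\mathsf{V}_n)\ast(\mathsf{T}_n^{\ast j'})\ast\b{e}_1=\b{e}_1^H(\mathsf{T}_n^{\ast j})\b{e}_1 .
\]
For $j=2n-1$, factor the left-hand side as $(\b{w}^H\ast\mathsf{A}^{\ast n})\ast(\mathsf{A}^{\ast(n-1)}\ast\b{v})$ and insert the two extended identities from the third step; the only new contribution is $\b{w}_n^H\ast\mathsf{V}_n\ast(\mathsf{T}_n^{\ast(n-1)})\ast\b{e}_1$, which vanishes since $\b{w}_n^H\ast\mathsf{V}_n=0$ by the biorthogonality package, leaving exactly $\b{e}_1^H(\mathsf{T}_n^{\ast(2n-1)})\b{e}_1$. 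I expect the second step to be the genuine obstacle: it is the familiar Lanczos biorthogonality computation, but carried out entirely within $(\D,\ast)$, so the real work is checking that no manipulation silently uses commutativity or inverts a non-invertible element; everything else is formal and rests only on associativity of $\ast$, the identity role of $1_\ast$, and the banded, unit-superdiagonal structure of $\mathsf{T}_n$.
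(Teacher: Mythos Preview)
The paper does not prove Theorem~\ref{thm:mmp}: it is quoted verbatim from \cite{GiscardPozza2019} as background, and only its statement (together with the biorthogonality identity $\mathsf{W}_N^H\ast\mathsf{V}_N=\mathsf{Id}_\ast$ in the subsequent Remark) is used in the present work. So there is no in-paper proof to compare against.

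That said, your argument is correct and is exactly the route one expects the cited reference to take: it is the classical non-Hermitian Lanczos moment-matching proof transported verbatim into the associative unital ring $(\D,\ast)$. The matrix recurrences $\mathsf{A}\ast\mathsf{V}_n=\mathsf{V}_n\ast\mathsf{T}_n+\widehat{\b{v}}_n\,\b{e}_n^H$ and $\mathsf{W}_n^H\ast\mathsf{A}=\mathsf{T}_n\ast\mathsf{W}_n^H+\b{e}_n\,\b{w}_n^H$ follow directly from Table~\ref{algo:*lan}; the biorthogonality induction goes through because the only inversion ever used is that of $\beta_n$, which is precisely the non-breakdown hypothesis, and the key identity $\b{w}_n^H\ast\widehat{\b{v}}_n=\beta_n$ follows from the definition $\beta_n=\b{w}_n^H\ast\mathsf{A}\ast\b{v}_{n-1}$ together with the already-established orthogonalities $\b{w}_n^H\ast\b{v}_{n-1}=\b{w}_n^H\ast\b{v}_{n-2}=0$. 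Your handling of the top moment $j=2n-1$ via the split $n+(n-1)$, the observation $(\mathsf{T}_n^{\ast(n-1)})_{1,n}=1_\ast$ (product of the $\delta$ superdiagonal entries), and the vanishing $\b{w}_n^H\ast\mathsf{V}_n=0$ is the standard and correct way to squeeze out the extra moment. Nothing in the argument uses commutativity or inverts anything other than the $\beta_j$.
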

Combining this with Eq.~(\ref{RseriesA}) we have, for $n=N$, the exact expression
$$
\b{w}^H\mathsf{U}(t',t)\b{v}=\Theta(t'-t) \ast \mathsf{R}_{\ast}(\mathsf{T}_{n})_{1,1}(t',t),
$$
while for $n<N$, the right-hand side yields an approximation to the time-ordered exponential. 
The method of path-sum \cite{Giscard2015} then gives explicitly
\begin{equation}\label{PSresult}
\mathsf{R}_{\ast}(\mathsf{T}_n)_{1,1}(t',t) 
= \Big(1_\ast - \alpha_0-\big(1_\ast-\alpha_1-(1_\ast-...)^{\ast-1}\ast\beta_2\big)^{\ast-1}\ast\beta_1\Big)^{\ast-1}.
\end{equation}
The $\ast$-Lanczos algorithm therefore provides the first general purpose approach to the calculation of ordered exponentials that is both exact, reaching the solution in a finite number of steps, and amenable to large-scale numerical computations.

\begin{remark}
The described tridiagonalization of the system of ODEs of \eqref{FundamentalSystem} can also be seen as a $\ast$-factorization of the matrix $\mathsf{A}$. Consider the matrices $\mathsf{W}_N = [\b{w}_0, \dots, \b{w}_{N-1}]$ and $\mathsf{V}_N = [\b{v}_0, \dots, \b{v}_{N-1}]$ composed of the vectors computed by the $\ast$-Lanczos algorithm. Then 
\begin{equation}\label{eq:asttrid}
    \mathsf{T}_N = \mathsf{W}_N^H \ast \mathsf{A} \ast \mathsf{V}_N, \quad \mathsf{W}_N^H \ast \mathsf{V}_N = \mathsf{Id}_{\ast},
\end{equation}
and
\begin{equation*}
   \mathsf{R}_{\ast}(\mathsf{A}) =  \mathsf{V}_N \ast  \mathsf{R}_{\ast}(\mathsf{T}_N)  \ast \mathsf{W}_N^H, 
\end{equation*}
with $\mathsf{Id}_{\ast}\equiv \mathsf{Id}\,1_\ast$ the identity with respect to the $\ast$-matrix-product \cite{GiscardPozza2019}.
\end{remark}

A crucial assumption underlying these results is that the algorithm suffers no breakdown. This is related to the nature of the $\alpha_j$ and $\beta_j$ distributions appearing in the $\mathsf{T}_n$ matrices and which are produced by the $\ast$-Lanczos procedure through recurrence relations. These necessitate the $\ast$-inversion of the $\beta_j$, i.e., the calculation of a distribution $\beta^{\ast-1}_j$ such that $\beta^{\ast-1}_j\ast \beta_j = \beta_j \ast \beta^{\ast-1}_j=1_\ast$. The paper \cite{GiscardPozza2019} assumed the existence of such $\ast$-inverses, without which the algorithm breaks down. If $\beta_j$ is not identically null, the existence of $\beta^{\ast-1}_j$ almost everywhere on $I$ was proven in a separate work \cite{ProceedingsPaper2019} assuming \textit{ad minima} that the $\alpha_j$ and $\beta_j$ would always be piecewise smooth elements of $\Sm$. In other terms, these works conjectured that the tridiagonalization of the system \eqref{FundamentalSystem} with $\mathsf{A}$ composed of functions of $\Sm$ is possible when the coefficients $\beta_1, \dots, \beta_{N-1}$ are not identically null. Here we establish this surprisingly difficult conjecture. 
Moreover we show that there exists $\b{w}, \b{v}$ so that the tridiagonalization \eqref{eq:asttrid} exists for $t',t \in I$ if for every $t' \in I$ it holds
\begin{equation}\label{eq:classtrid}
     \mathsf{J}_{t'} = \mathsf{Z}^{-1}_{t'}\, \mathsf{A}(t')\, \mathsf{Z}_{t'},
\end{equation}
with $\mathsf{J}_{t'}$ a tridiagonal matrix with nonzero off-diagonal elements, and $\mathsf{Z}_{t'}$ a square invertible matrix. This means that $\mathsf{A}(t')$ must be tridiagonalizable in the usual sense (note that \eqref{eq:classtrid} considers the usual matrix-product).

\section{Main Theorem: tridiagonalization with piecewise smooth functions and characterization of algorithmic breakdowns}
Before we state the main theorem on the tridiagonalization of systems of coupled linear differential equations with non-constant coefficients, we begin by exhibiting a relation between breakdowns in the $\ast$-Lanczos procedure and breakdowns in the ordinary non-Hermitian Lanczos procedure. This characterizes one of the assumptions of the main theorem and shows that the feasibility of tridiagonalization does not depend on the nature of the entries of the original matrix nor on the kind of product between these entries. Rather breakdowns in tridiagonalization must be topological in origin, i.e., they depend on the structure and the edge weights of the graph whose adjacency matrix is $\mathsf{A}$.
\begin{lemma}\label{lemma:breakdown}
   Let $\mathsf{T}_n$ be the tridiagonal matrix \eqref{eq:tridiag} obtained by $n$ iterations of the $\ast$-Lanczos algorithm in Table \ref{algo:*lan} with inputs $\mathsf{A}(t',t)=\tilde{\mathsf{A}}(t')\Theta(t'-t), \b{w}, \b{v}$,
   where all the entries of $\widetilde{\mathsf{A}}(t')$ are smooth functions of $t'$, and $\b{w},\b{v}$ are time-independent vectors with $\b{w}^H\b{v}=1$.
   Assume that the $\ast$-Lanczos coefficients $\alpha_{j-1}, \beta_j$ are in $\Sm$ and that $\beta_j(t,t) \equiv 0$, for every $j=1,\dots,n-1$.
   Let us denote with $\widetilde{\beta}_j^{(1,0)}(t',t)$ and $\widetilde{\beta}_j^{(0,1)}(t',t)$ respectively the derivative with respect to $t'$ and $t$ of $\widetilde{\beta}_j(t',t)$.
   Then the following statements are equivalent:
   \begin{enumerate}
       \item $\widetilde{\beta}_1^{(1,0)}(t,t), \dots, \widetilde{\beta}_{n-1}^{(1,0)}(t,t)$ are not identically null on $I$;\label{stat:beta1}
        \item $\widetilde{\beta}_1^{(0,1)}(t,t), \dots, \widetilde{\beta}_{n-1}^{(0,1)}(t,t)$ are not identically null on $I$;\label{stat:beta2} 
       \item There exists at least one $\rho \in I$ so that the usual non-Hermitian Lanczos algorithm with inputs $\widetilde{\mathsf{A}}(\rho), \b{w}, \b{v}$ has no (true) breakdown in the first $n-1$ iterations.\label{stat:lanc}
   \end{enumerate}
\end{lemma}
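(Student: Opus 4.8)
The plan is to dispatch (\ref{stat:beta1})$\Leftrightarrow$(\ref{stat:beta2}) by a one‑line computation and then to prove the equivalence with (\ref{stat:lanc}) by tracking the behaviour of the $\ast$‑Lanczos iterates in the immediate vicinity of the diagonal $t'=t$, where the $\ast$‑product degenerates onto the ordinary matrix product with the frozen matrix $\widetilde{\mathsf{A}}(t)$. For the first equivalence: since $\beta_j\in\Sm$ we may write $\beta_j=\widetilde{\beta}_j\,\Theta$, and the hypothesis is that $t\mapsto\widetilde{\beta}_j(t,t)$ is identically null on $I$; differentiating this relation gives $\widetilde{\beta}_j^{(1,0)}(t,t)=-\widetilde{\beta}_j^{(0,1)}(t,t)$ for all $t\in I$, so one of the two functions vanishes identically on $I$ exactly when the other does, which is (\ref{stat:beta1})$\Leftrightarrow$(\ref{stat:beta2}).

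For the rest I would attach to each nonzero $f\in\D$ an integer valuation $\mu(f)$ recording the order of its leading term at the diagonal ($\mu(\widetilde{f}\,\Theta)$ is the order of vanishing of $\widetilde{f}(t',t)$ at $t'=t$ when $f\in\Sm$; $\mu(\delta^{(i)})=-i-1$; $\mu(\mathsf{A})=0$), together with its leading coefficient $\Lambda(f)$, a smooth function of the single diagonal variable — scalar, vector or matrix valued according to $f$ — normalised so that $\Lambda(\delta^{(i)})\equiv1$, $\Lambda(\mathsf{A})(t)=\widetilde{\mathsf{A}}(t)$, and $\Lambda(\widetilde{f}\,\Theta)(t)=\tfrac{1}{p!}\partial_{t'}^{\,p}\widetilde{f}(t,t)$ when $\widetilde{f}$ vanishes to order $p$ on the diagonal. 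From the Volterra integral $\int_t^{t'}(t'-\tau)^p(\tau-t)^q\,\text{d}\tau=(t'-t)^{p+q+1}p!\,q!/(p+q+1)!$ and the reduction $(t'-t)^k\delta^{(m)}(t'-t)=(-1)^k\tfrac{m!}{(m-k)!}\delta^{(m-k)}(t'-t)$ one verifies that $\mu(f\ast g)=\mu(f)+\mu(g)+1$ with $\Lambda(f\ast g)=\Lambda(f)\Lambda(g)$ (appropriate products) for nonzero $f,g$, and that for $f,g$ of equal valuation $\mu(f+g)\ge\mu(f)$, with equality and $\Lambda(f+g)=\Lambda(f)+\Lambda(g)$ unless the leading coefficients cancel, in which case the valuation strictly increases. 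Thus $\Lambda$ is a $\ast$‑homomorphism and is additive modulo cancellation of leading terms. For $\beta_j\in\Sm$ with $\widetilde{\beta}_j(t,t)\equiv0$ one has $\mu(\beta_j)\ge1$, while $\widetilde{\beta}_j^{(1,0)}(t,t)$ is exactly the coefficient of $(t'-t)$ in the diagonal Taylor expansion of $\widetilde{\beta}_j$; hence statement (\ref{stat:beta1}) for the index $j$ is precisely the assertion $\mu(\beta_j)=1$, i.e.\ that $\Lambda(\beta_j)$ (equal then to $\widetilde{\beta}_j^{(1,0)}(\cdot,\cdot)$) is not identically zero on $I$.

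Next I would run the recurrences of Table~\ref{algo:*lan} in parallel with the ordinary non‑Hermitian Lanczos recurrences for the frozen matrix $\widetilde{\mathsf{A}}(\rho)$, $\rho\in I$, and prove by induction on the iteration index that, \emph{so long as no leading‑term cancellation occurs}, the functions $\Lambda(\b{v}_j),\Lambda(\b{w}_j^H),\Lambda(\alpha_j),\Lambda(\beta_j)$ obey exactly the ordinary Lanczos recurrences, hence coincide pointwise with the ordinary Lanczos quantities built from $\widetilde{\mathsf{A}}(\rho)$ wherever those are defined; the base step is immediate from the Volterra integral ($\Lambda(\alpha_0)(t)=\b{w}^H\widetilde{\mathsf{A}}(t)\b{v}$, $\Lambda(\beta_1)(t)=\b{w}^H\widetilde{\mathsf{A}}(t)^2\b{v}-(\b{w}^H\widetilde{\mathsf{A}}(t)\b{v})^2$), and the inductive step is the algebra of $\Lambda$ applied line by line to Table~\ref{algo:*lan}, using $\Lambda(\beta_j^{\ast-1})=\Lambda(\beta_j)^{-1}$ wherever $\Lambda(\beta_j)$ is invertible. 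This already yields (\ref{stat:lanc})$\Rightarrow$(\ref{stat:beta1}): at a $\rho$ at which the ordinary process has no true breakdown in its first $n-1$ steps, it stays in sync with the $\ast$‑process through step $n-1$, so $\widetilde{\beta}_j^{(1,0)}(\rho,\rho)=\Lambda(\beta_j)(\rho)=\beta_j^{\mathrm{ord}}(\widetilde{\mathsf{A}}(\rho))\neq0$ for $j=1,\dots,n-1$, and none of the $\widetilde{\beta}_j^{(1,0)}(t,t)$ is identically null.

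The converse (\ref{stat:beta1})$\Rightarrow$(\ref{stat:lanc}) is where I expect the real work to lie. Let $Z\subseteq I$ be the set of $\rho$ for which the ordinary process with $\widetilde{\mathsf{A}}(\rho)$ suffers a true breakdown within its first $n-1$ steps; it suffices to show $Z\neq I$, for then any $\rho\in I\setminus Z$ is the point required by (\ref{stat:lanc}). For $\rho\in Z$ let $j=j(\rho)\le n-1$ be the first true‑breakdown step; up to step $j$ the two processes are in sync, so $\Lambda(\beta_j)(\rho)=\beta_j^{\mathrm{ord}}(\widetilde{\mathsf{A}}(\rho))=0$ while $\b{\widehat v}_j$ and $\b{w}_j^H$ are non‑degenerate at $\rho$. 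The heart of the matter is to rule out that some open subinterval $J$ consists entirely of true‑breakdown points at a common step $j$ (after a clean run of the first $j-1$ steps on $J$): there $\Lambda(\beta_j)\equiv0$ gives $\mu(\beta_j|_J)\ge2$, so $\beta_j^{\ast-1}|_J$, and hence $\b{v}_j|_J=\b{\widehat v}_j\ast\beta_j^{\ast-1}|_J$, is strictly more singular than in the generic run, whence $\alpha_j|_J=\b{w}_j^H\ast\mathsf{A}\ast\b{v}_j$ would carry a Dirac component — contradicting $\alpha_j\in\Sm$. I expect the bulk of the technical effort to be the faithful version of this argument: rigorous valuation bookkeeping along the recurrences, together with the case analysis separating true breakdowns from lucky ones (the latter also forcing $\Lambda(\beta_j)\equiv0$ on a subinterval, but without contradiction and without a true breakdown) and separating degeneracies that hold on a whole subinterval from those occurring only at isolated points. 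Granting it, $Z$ is a finite union of closed sets with empty interior, hence nowhere dense, and $I\setminus Z\neq\emptyset$. One routine point used throughout: unlike the $\b{w}_j^H$, the vectors $\b{v}_j$ genuinely leave $\Sm$ — they carry $\delta^{(j)}$‑type leading terms — which is why $\Lambda$ and its multiplicativity must be set up on all of $\D$, not just on $\Sm$.
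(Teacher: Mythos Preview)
Your one–line dispatch of (\ref{stat:beta1})$\Leftrightarrow$(\ref{stat:beta2}) by differentiating the identity $\widetilde\beta_j(t,t)\equiv0$ is correct and in fact cleaner than the paper, which obtains both of these only indirectly through (\ref{stat:lanc}).

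For the equivalence with (\ref{stat:lanc}) you and the paper share the same underlying idea---collapse to the diagonal $t'=t$, where the $\ast$-product degenerates onto the ordinary product with the frozen matrix $\widetilde{\mathsf A}(t)$---but the implementations are genuinely different. You set up a valuation $\mu$ and leading-coefficient map $\Lambda$ on all of $\D$ and push them through every line of Table~\ref{algo:*lan}, so that $\Lambda(\beta_j)$ is identified with the ordinary Lanczos $\beta_j^{\text{ord}}$ step by step. The paper instead bypasses the recurrences altogether: it applies Lemma~\ref{ZerosDelta} to both sides of the already-established moment identity of Theorem~\ref{thm:mmp}, $\b w^H\mathsf A^{\ast j}\b v=(\mathsf T_n^{\ast j})_{11}$, taking $j$ diagonal derivatives to obtain the \emph{ordinary} moment identity
\[
\b w^H\widetilde{\mathsf A}(t)^{\,j}\b v=\big(\mathsf J_n(t)^{\,j}\big)_{11},\qquad j=0,\dots,2n-1,
\]
where $\mathsf J_n(t)$ is tridiagonal with diagonal $\widetilde\alpha_k(t,t)$, subdiagonal $\widetilde\beta_k^{(1,0)}(t,t)$ and superdiagonal $1$. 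The standard characterisation of non-Hermitian Lanczos breakdown in terms of moment matching then gives, \emph{pointwise in $\rho$}, the equivalence between ``all $\widetilde\beta_j^{(1,0)}(\rho,\rho)\neq0$'' and ``no true breakdown at $\rho$ in the first $n-1$ steps''. What your $\Lambda$-calculus buys is an algorithm-level picture (and it forces you to extend $\Lambda$ to the singular $\b v_j\notin\Sm$); what the paper's moment route buys is brevity---one derivative computation and one citation.

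Your direction (\ref{stat:lanc})$\Rightarrow$(\ref{stat:beta1}) is fine. For (\ref{stat:beta1})$\Rightarrow$(\ref{stat:lanc}), though, your proposed route---forcing a contradiction with $\alpha_j\in\Sm$ on a hypothetical breakdown subinterval $J$---is both admittedly incomplete and heavier than necessary, and it has a structural weak point: $\beta_j$, $\beta_j^{\ast-1}$ and $\alpha_j$ are global objects on $I^2$, so ``$\mu(\beta_j|_J)\ge2$'' is not a well-posed statement in your framework, and $\alpha_j\in\Sm(I)$ does not obviously constrain what happens only over $J$. With the paper's pointwise equivalence in hand, all that is needed is a single $\rho$ at which every $\widetilde\beta_j^{(1,0)}(\rho,\rho)$ is nonzero; the paper closes with the observation that each of these finitely many smooth diagonal traces, being not identically null, is nonzero on an open dense (indeed full-measure) subset of $I$, so a common $\rho$ exists. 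I would replace your $\alpha_j\in\Sm$ block by this much shorter argument.
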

Note that statement \ref{stat:beta1} (or equivalently Statement \ref{stat:beta2}) in Lemma~\ref{lemma:breakdown} also implies that there cannot be a breakdown in the first $n$ iterations of the $\ast$-Lanczos algorithm in Table \ref{algo:*lan}, meaning that $\beta_1, \dots, \beta_{n-1}$ are $\ast$-invertible almost everywhere on $I$.
Hence Statement \ref{stat:lanc} in Lemma~\ref{lemma:breakdown} is a sufficient condition for not having a breakdown in the $\ast$-Lanczos Algorithm.
We also remark that the matrix $\widetilde{\mathsf{A}}(\rho)$ is tridiagonalizable in the sense of \eqref{eq:classtrid} if and only if there exists $\b{w}, \b{v}$
so that the usual non-Hermitian Lanczos algorithm with inputs $\widetilde{\mathsf{A}}(\rho), \b{w}, \b{v}$  has no (true) breakdown until the last iteration; see, e.g., \cite{Par92}.
\\[-.5em] 

Now we are ready to state our main result.
\begin{theorem}\label{MainTheorem}
Let $\widetilde{\mathsf{A}}(t')$ be a $N\times N$ time dependent matrix and let $\mathsf{U}(t',t)$ be its time-ordered exponential. Let $\b{w}$ and $\b{v}$ be time-independent $N\times 1$ vectors with $\b{w}^H\b{v}=1$.
Assume that for every $t'$ in $I$, the usual non-Hermitian Lanczos algorithm with inputs $\widetilde{\mathsf{A}}(t'), \b{w}, \b{v}$ has no  (true) breakdown in the $k$th iteration, for $k=1,\dots,N-1$.
If all the entries of $\widetilde{\mathsf{A}}(t')$ are smooth functions of $t'$,
then there are smooth functions $\widetilde{\alpha}_{0\leq j\leq N-1}$, $\widetilde{\beta}_{1\leq i\leq N-1}$ and distributions
\begin{align*}
&\alpha_{j}(t',t):=\widetilde{\alpha}_j(t',t)\Theta(t'-t),\\
&\beta_{i}(t',t):=\widetilde{\beta}_{i}(t',t)\Theta(t'-t),
\end{align*}
such that $\beta_i(t,t)\equiv 0$, $\widetilde{\beta}^{(1,0)}_i(t,t)\not \equiv 0$, $\widetilde{\beta}^{(0,1)}_i(t,t)\not \equiv 0$ 
and the tridiagonal matrix 
\begin{equation*}
    \mathsf{T} := \begin{bmatrix} 
            \alpha_0 & \delta &          & \\ 
            \beta_1  & \alpha_1 & \ddots & \\ 
                     & \ddots   & \ddots & \delta \\
                     &           & \beta_{N-1} & \alpha_{N-1}
          \end{bmatrix}, % \quad
%    \mathsf{T}_n^D = \begin{bmatrix} 
%            \alpha_0 & 1 &          & \\ 
%            \beta_1  & \alpha_1 & \ddots & \\ 
%                     & \ddots   & \ddots & 1 \\
%                     &           & \beta_{n-1} & \alpha_{n-1}
%          \end{bmatrix},
\end{equation*}
obeys 
\begin{align*}
&\b{w}^H\mathsf{A}^{\ast n}\b{v}=\big(\mathsf{T}^{\ast n}\big)_{1,1},~n\geq 0\\
&\b{w}^H\mathsf{U}(t',t)\b{v}=\Theta(t'-t)\ast \mathsf{R}_{\ast}(\mathsf{T})_{1,1}(t',t),
\end{align*}
where $\mathsf{A}(t',t):=\widetilde{\mathsf{A}}(t')\Theta(t'-t)$. Furthermore, the $\ast$-inverses $\beta_{1\leq i\leq N-1}^{\ast -1}$ exist and are of the form $\beta_i^{\ast -1}=\delta^{(3)}\ast b$, with $b\in \Sm$.
\end{theorem}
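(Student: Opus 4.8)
The plan is to run an induction on the iteration index that tracks simultaneously the analytic and the distributional structure of everything the algorithm of Table~\ref{algo:*lan} produces. The device I would organise the argument around is a filtration of $\D$ by the ``order of vanishing on the diagonal'': to $f=\widetilde f\,\Theta(t'-t)+\sum_i\widetilde f_i\,\delta^{(i)}(t'-t)\in\D$ assign $\mathrm{ord}(f):=1+m$ if the Dirac part is absent and $\widetilde f$ vanishes exactly to order $m\ge0$ on $t'=t$, and $\mathrm{ord}(f):=-i$ on the highest surviving $\delta^{(i)}$-term otherwise. Thus $\mathrm{ord}(\mathsf{A})=1$, $\mathrm{ord}(1_\ast)=0$, $\mathrm{ord}(\delta^{(i)})=-i$, $\Sm=\{\,f:\mathrm{ord}(f)\ge1\,\}$, and $\{\,f\in\Sm:\widetilde f(t,t)\equiv0\,\}=\{\,f:\mathrm{ord}(f)\ge2\,\}$. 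From the explicit $\ast$-formulas of Section~\ref{ProdDef} one checks $\mathrm{ord}(f\ast g)\ge\mathrm{ord}(f)+\mathrm{ord}(g)$, with a ``leading symbol'' $\sigma(f)$ — the matrix/vector/scalar coefficient of the top term — governing the equality case, $\sigma(f\ast g)=\sigma(f)\sigma(g)$ (ordinary matrix product), the inequality being strict only when this product of symbols vanishes. The inductive invariant would state, for the objects produced through the $n$-th iteration: no breakdown has occurred; $\b{w}_j^H\in\D$ with $\mathrm{ord}(\b{w}_j^H)=j$ (so $\b{w}_j^H\in\Sm$ for $j\ge1$), $\b{v}_j\in\D$ with $\mathrm{ord}(\b{v}_j)=-j$ and top Dirac term $\delta^{(j)}$, $\alpha_j\in\Sm$ with $\mathrm{ord}(\alpha_j)\ge1$, $\beta_j\in\Sm$ with $\mathrm{ord}(\beta_j)=2$; each $\beta_j$ is $\ast$-invertible with $\beta_j^{\ast-1}=\delta^{(3)}\ast b_j$, $b_j\in\Sm$; and the leading symbols $\sigma(\b{w}_j^H),\sigma(\b{v}_j),\sigma(\alpha_j),\sigma(\beta_j)$ coincide, as smooth functions of $t\in I$, with the vectors and coefficients produced by the \emph{ordinary} non-Hermitian Lanczos algorithm applied to $\widetilde{\mathsf{A}}(t),\b{w},\b{v}$, all of which are well defined and nonzero (the $\beta_j$'s in particular) for every $t$ by hypothesis.

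In the inductive step one feeds the recursions of Table~\ref{algo:*lan} through this bookkeeping. From $\mathrm{ord}(\b{w}_{n-1}^H)=n-1$, $\mathrm{ord}(\mathsf{A})=1$ and $\mathrm{ord}(\b{v}_{n-1})=-(n-1)$ one gets at once $\mathrm{ord}(\alpha_{n-1})\ge1$, hence $\alpha_{n-1}\in\Sm$; then $\mathrm{ord}(\b{w}_n^H)\ge n$ (all three terms in its defining recursion having order $\ge n$), whence $\mathrm{ord}(\beta_n)\ge n+1-(n-1)=2$ from $\beta_n=\b{w}_n^H\ast\mathsf{A}\ast\b{v}_{n-1}$, so $\beta_n\in\Sm$ with $\widetilde\beta_n(t,t)\equiv0$ — \emph{the singular parts of the increasingly singular vectors $\b{v}_{n-1},\b{w}_n^H$ cancelling automatically, by the order count alone, in the bilinear forms defining $\alpha_{n-1}$ and $\beta_n$}. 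Applying $\sigma$ line by line to the algorithm reproduces verbatim the ordinary Lanczos recursion for $\widetilde{\mathsf{A}}(t)$ — this is (the content of the proof of) Lemma~\ref{lemma:breakdown} — so the no-breakdown hypothesis, holding at \emph{every} $t'\in I$, forces the product symbol $\sigma(\b{w}_n^H)\widetilde{\mathsf{A}}(t)\sigma(\b{v}_{n-1})$ (which equals the ordinary $\beta_n$) to be nonzero, hence $\mathrm{ord}(\beta_n)=2$ and $\sigma(\beta_n)$ the ordinary $\beta_n$; likewise $\mathrm{ord}(\b{w}_n^H)=n$, $\mathrm{ord}(\b{v}_n)=-n$. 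Equality $\mathrm{ord}(\beta_n)=2$ is precisely $\widetilde\beta_n^{(1,0)}(t,t)=\sigma(\beta_n)\neq0$ for all $t\in I$, and since $\widetilde\beta_n(t,t)\equiv0$ forces $\widetilde\beta_n^{(0,1)}(t,t)=-\widetilde\beta_n^{(1,0)}(t,t)$, also $\widetilde\beta_n^{(0,1)}(t,t)\neq0$.

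It remains to produce $\beta_n^{\ast-1}$ in the stated form and to confirm $\mathrm{ord}(\b{v}_n)=-n$. Positing $\beta_n^{\ast-1}=\delta^{(3)}\ast b_n$ with $b_n=\widetilde b_n\,\Theta\in\Sm$ and imposing $(\delta^{(3)}\ast b_n)\ast\beta_n=1_\ast$, the elementary $\ast$-calculus of Section~\ref{ProdDef} reduces the requirement to the scalar identity $\int_t^{t'}\widetilde b_n(t',\tau)\,\widetilde\beta_n(\tau,t)\,\mathrm{d}\tau=\tfrac12(t'-t)^2$ for $t\le t'$ in $I$; differentiating twice in $t$ and using $\widetilde\beta_n(t,t)\equiv0$ turns this into a linear Volterra integral equation of the second kind (in the variable $t$, with $t'$ a parameter) for $\widetilde b_n$, whose ``diagonal'' coefficient is $\widetilde\beta_n^{(0,1)}(t,t)\neq0$; it has a unique, jointly smooth solution on $I$, and one reads off $\widetilde b_n(t,t)=1/\widetilde\beta_n^{(1,0)}(t,t)$. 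Since $\beta_n$, having order $2$ with nonzero symbol, is not a zero divisor in $(\D,\ast)$, this left inverse is automatically two-sided, so $\beta_n^{\ast-1}=\delta^{(3)}\ast b_n$. Finally $\mathrm{ord}(\delta^{(3)}\ast b_n)=-3+1=-2$, and the three terms of $\b{\widehat v}_n=\mathsf{A}\ast\b{v}_{n-1}-\b{v}_{n-1}\ast\alpha_{n-1}-\b{v}_{n-2}$ all have order $-(n-2)$ (their symbols combining, again by the intertwining, into the nonzero ordinary-Lanczos vector $\b{\widehat v}_n$), whence $\mathrm{ord}(\b{v}_n)=\mathrm{ord}(\b{\widehat v}_n\ast\beta_n^{\ast-1})=-n$. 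This closes the induction; taking $n=N$, the first displayed identity of the theorem is Theorem~\ref{thm:mmp}, and the second follows on combining it with \eqref{RseriesA} and \eqref{OrderedExp}.

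The principal obstacle is to make the ``order of vanishing on the diagonal'' filtration and its leading-symbol calculus rigorous over $\D$ — notably the inequality $\mathrm{ord}(f\ast g)\ge\mathrm{ord}(f)+\mathrm{ord}(g)$ and the (non-commutative) multiplicativity of $\sigma$ in the matrix- and vector-valued cases, uniformly in $t$, and the verification that $\sigma$ genuinely intertwines the $\ast$-Lanczos recursion with the ordinary Lanczos recursion for $\widetilde{\mathsf{A}}(t)$. This is exactly the circle of ideas behind Lemma~\ref{lemma:breakdown}, and it is where the strengthened hypothesis (no ordinary breakdown at \emph{every} $t'\in I$, not merely at a single $\rho$) is genuinely used: it rules out the order-jumps that would otherwise let singular terms survive in the $\alpha_j,\beta_j$, and it makes the Volterra equation defining each $b_j$ well posed throughout $I$. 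Tracking the precise leading symbols through the recursions and dispatching the base case $n=1$ is then routine.
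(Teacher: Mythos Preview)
Your approach is correct in outline and genuinely different from the paper's. The paper never tracks the Lanczos vectors $\b{w}_j^H,\b{v}_j$ directly; instead it works with the accumulated product $F_n:=\beta_n\ast\cdots\ast\beta_1$, identified via the moment-matching Theorem~\ref{thm:mmp} as $m_{2n}-\big(\mathsf{T}_n^{\ast 2n}\big)_{11}$, and uses Lemma~\ref{ZerosDelta} to control the diagonal vanishing of this object and Lemma~\ref{InverseFormF} (a Neumann series) to invert it, obtaining $\beta_{n+1}=F_{n+1}\ast F_n^{\ast-1}\in\Sm$. Your filtration by $\mathrm{ord}$ together with the leading-symbol map $\sigma$ is a more structural packaging of the same analytic facts: the inequality $\mathrm{ord}(f\ast g)\ge\mathrm{ord}(f)+\mathrm{ord}(g)$ and the multiplicativity of $\sigma$ are precisely what Lemma~\ref{ZerosDelta} and the formulas of Section~\ref{StarAction} encode, and your observation that $\sigma$ intertwines the $\ast$-recursion with the ordinary Lanczos recursion is exactly the mechanism behind Lemma~\ref{lemma:breakdown}. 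What your route buys is a transparent, line-by-line reduction to classical Lanczos and direct control of the singular vectors $\b{v}_j$ (which the paper never touches); what the paper's route buys is that, by bundling everything into $F_n$, it stays inside $\Sm$ throughout and never needs the $\mathrm{ord}/\sigma$ calculus on genuinely singular distributions, so the bookkeeping is lighter.

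One point to tighten: the sentence ``since $\beta_n$ is not a zero divisor, this left inverse is automatically two-sided'' is not a valid inference in a general non-commutative ring. What you actually have is enough to fix it easily: run the symmetric Volterra argument (differentiate in $t'$ instead of $t$, using $\widetilde\beta_n^{(1,0)}(t,t)\neq0$) to produce a right inverse $c_n\ast\delta^{(3)}$, and then the standard identity (left inverse)$=$(left inverse)$\ast\beta_n\ast$(right inverse)$=$(right inverse) shows they coincide. This is also how the paper's Lemma~\ref{InverseFormF} is stated, giving both $f_{\mathrm{inv}}^L$ and $f_{\mathrm{inv}}^R$.
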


\begin{remark}\label{Rem:nonessential}
 The Dirac delta distributions $\delta$ in the upper diagonal of the matrix $\mathsf{T}$ are non-essential. Indeed, one can instead choose to replace them by the piecewise smooth function $\Theta(t'-t)$, $\Theta(0)=1$ if at the same time all $\beta_j$ coefficients are replaced with $\frac{\partial\tilde{\beta}_j}{\partial t'}\times \Theta(t'-t)$ for $1\leq j\leq N-1$. The feasibility of this operation is guaranteed by the main theorem above. 
 Here, we retain the version with isolated non-essential delta distributions for the ease of the proof. 
\end{remark}

The proofs of the Theorem~\ref{MainTheorem} and Lemma~\ref{lemma:breakdown} occupy the remainder of the present work. We proceed as follows: in Section~\ref{StarAction} we begin with basic results pertaining to the $\ast$-action of derivatives of the Dirac delta distribution. In Section~\ref{TechnicalLemmas} we gather technical Lemmas pertaining to $\ast$-products of piecewise smooth functions of $\Sm$ as well as on the existence and form of their $\ast$-inverses. Section \ref{sec:breakdown} proves Lemma~\ref{lemma:breakdown}. The previous results lead onto the main argument of the proof, in Section~\ref{MainArg}, which is an induction on the $\alpha_j$ and $\beta_j$ generalized functions produced by the $\ast$-Lanczos algorithm.\\

\section{Proofs}
\subsection{$\ast$-Action of delta derivatives}\label{StarAction}
We begin by recalling basic results pertaining to the $\ast$-action of derivatives of the Dirac delta distribution. We denote by $\delta^{(j)}(t'-t)$ the $j$th derivative of the Dirac delta distribution $\delta(t'-t)\equiv \delta^{(0)}(t'-t)$. We generally omit the $(t'-t)$ argument to alleviate the equations, unless absolutely necessary (we do the same with the Heaviside function $\Theta(t'-t)$). For a distribution $f$ depending on two times or less, we have \cite{ProceedingsPaper2019,schwartz1978}
\begin{align*}
    \big(f\ast\delta^{(j)}\big)(t',t)  &=(-1)^jf^{(0,j)}(t',t),\\
    \big(\delta^{(j)}\ast f\big)(t',t) &=f^{(j,0)}(t',t),\\
    \delta^{(j)}\ast\delta^{(k)}&=\delta^{(j+k)}, \\
    \Theta \ast \delta' &= \delta. 
\end{align*}
The notation $f^{(j,k)}(\tau,\rho)$ stands for the $j$th $t'$-derivative and $k$th $t$-derivative of $f$ evaluated at $t'=\tau, t=\rho$ with the understanding that $j=0$ or $k=0$ means no derivative is taken. 
Since the $\ast$-product is associative, $\big(\delta^{(i)}\ast f\big)\ast \delta^{(j)}=\delta^{(i)}\ast \big(f \ast \delta^{(j)}\big)$ and the notation $f^{(i,j)}$ is well defined.
For piecewise smooth functions $f\in \Sm$,  $f(t',t)=\widetilde{f}(t',t)\Theta(t'-t)$, this implies
\begin{subequations}\label{eq:deltader} \begin{align}
       \delta^{(j)} * f(t',t) &= \widetilde{f}^{(j,0)}(t',t) \Theta +\sum_{k=0}^{j-1} \widetilde{f}^{(j-k-1,0)}(t,t) \delta^{(k)},\\
      f(t',t)\ast  \delta^{(j)}  &=(-1)^j \widetilde{f}^{(0,j)}(t',t) \Theta + \sum_{k=0}^{j-1}(-1)^{k+j+1}\widetilde{f}^{(0,j-k-1)}(t',t') \delta^{(k)}.
   \end{align}
\end{subequations}

Finally, we note the following identities between distributions for $j\geq 0$
\begin{subequations}\label{RelDeltas}
\begin{align}
\widetilde{f}(t')\delta^{(j)}(t'-t)&=(-1)^j \big(\widetilde{f}(t)\delta(t'-t)\big)^{(0,j)},\\
\widetilde{f}(t)\delta^{(j)}(t'-t)&= \big(\widetilde{f}(t')\delta(t'-t)\big)^{(j,0)};
%\widetilde{f}(t',t)\delta^{(j)}=\widetilde{f}(t,t)\delta^{(j)}=\widetilde{f}(t',t')\delta^{(j)},
\end{align}
\end{subequations}
see, e.g., \cite{ProceedingsPaper2019,schwartz1978}.

%we have
%$$
%\big(\delta^{(j)}\ast f\big)(t',t)=\widetilde{f}^{(j,0)}
%$$
%with . Note that $\delta^{(1)}$ and $\delta^{(2)}$ are often denoted $\delta'$ and $\delta''$ in the literature, a convention which we adopt in this work.
%The distribution $\delta^{(j)}$ satisfies the following property
%\begin{equation}\label{eq:ddirac:prop}
    %f(t')\delta^{(j)}(t'-t) =\frac{d^j}{d %t^j} \left( f(t) \delta(t'-t)\right), %\quad
    %f(t)\delta^{(j)}(t'-t) =\frac{d^j}{d %t'^j} \left( f(t') %\delta(t'-t)\right).\\[.5em]
%\end{equation}

\subsection{Technical results}\label{TechnicalLemmas}
In this section we gather crucial technical Lemmas. The first result pertains to derivatives of $\ast$-products of functions of $\Sm$, establishing that all derivatives of order $k\leq n$ of a $\ast$-product of $n$ functions are identically null when $t'=t$. The second Lemma gives the generic form for the $\ast$-inverse of a function of $\Sm$.

\begin{lemma}\label{ZerosDelta}
Let $f_j(t',t):=\widetilde{f}_j(t',t)\Theta(t'-t)$, $j=1,2,\dots$, be a family of functions of $\Sm$.
Let $F_n:=\widetilde{F}_n(t',t)\Theta(t'-t)=f_n\ast \cdots \ast f_1$ for $n\geq 2$. Then for $0\leq q+r\leq n-2$ we have,
\begin{equation}\label{DiagDeriv}
\widetilde{F}_n^{(q,r)}(t,t) \equiv 0,
\end{equation}
and consequently, for $0\leq q+r\leq n-1$,
\begin{equation}\label{DeltaAction}
\big(\delta^{(q)}\ast F_n\ast \delta^{(r)}\big)(t',t)=(-1)^r\widetilde{F}_n^{(q,r)}(t',t)\,\Theta(t'-t).
\end{equation}
In particular, $\widetilde{F}_n^{(n-1,0)}(t,t)=\widetilde{f}_n(t,t)\cdots \widetilde{f}_2(t,t)\widetilde{f}_1(t,t)=(-1)^{n-1}\widetilde{F}_n^{(0,n-1)}(t,t)$. More generally, if none of the $\widetilde{f}_j(t,t)$ are identically null, then $\widetilde{F}_n^{(q,r)}(t,t)\not\equiv 0$ when $q+r=n-1$.\\[-.5em]

If $\widetilde{f}_j(t,t)\equiv0$ is true for all $1\leq j\leq n$, then Eq.~(\ref{DiagDeriv}) is true for $n\geq 1$ as long as $0\leq q+r\leq 2n-2$ and  Eq.~(\ref{DeltaAction}) holds whenever $n\geq 1$ and $0\leq q+r\leq 2n-1$. If in addition none of the $\widetilde{f}_j(t,t)^{(1,0)}$ are identically null, then $\widetilde{F}_n^{(q,r)}(t,t)\not\equiv 0$ when $q+r=2n-1$.
\end{lemma}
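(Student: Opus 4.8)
The plan is to reduce both parts of the lemma to one structural fact: an off-diagonal factorization of $\widetilde{F}_n$ of Hadamard type. Concretely, I will prove by induction on $n$ that $\widetilde{F}_n(t',t) = (t'-t)^{d_n}\,\chi_n(t',t)$ for some smooth $\chi_n$, where $d_n = n-1$ in general and $d_n = 2n-1$ when $\widetilde{f}_j(t,t)\equiv 0$ for all $1 \le j \le n$ (the two inductions starting at $n=2$ and $n=1$ respectively, matching the ranges of the statement). The starting point is that on $\Sm$ the $\ast$-product is the Volterra composition, so the chain $f_n \ast \cdots \ast f_1$ unrolls to the iterated integral $\widetilde{F}_n(t',t) = \int_{t \le \tau_1 \le \cdots \le \tau_{n-1} \le t'} \widetilde{f}_n(t',\tau_{n-1})\cdots\widetilde{f}_1(\tau_1,t)\, \text{d}\tau_1\cdots\text{d}\tau_{n-1}$, equivalently $\widetilde{F}_n(t',t) = \int_t^{t'}\widetilde{f}_n(t',\tau)\widetilde{F}_{n-1}(\tau,t)\, \text{d}\tau$. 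For $n = 2$ the substitution $\tau = t + s(t'-t)$ turns $\widetilde{F}_2$ into $(t'-t)\int_0^1 \widetilde{f}_2(t',t+s(t'-t))\,\widetilde{f}_1(t+s(t'-t),t)\, \text{d}s$, whose remaining integral is smooth in $(t',t)$ (smooth integrand, compact parameter interval, differentiation under the integral sign). For the inductive step, insert $\widetilde{F}_{n-1}(\tau,t) = (\tau - t)^{d_{n-1}}\chi_{n-1}(\tau,t)$ into $\widetilde{F}_n(t',t) = \int_t^{t'}\widetilde{f}_n(t',\tau)\widetilde{F}_{n-1}(\tau,t)\, \text{d}\tau$ and rescale: this extracts $(t'-t)^{d_{n-1}+1}$ and leaves a smooth integral, so $d_n = d_{n-1}+1 = n-1$. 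In the special case, first apply the elementary Hadamard lemma — a smooth function vanishing on $\{t' = t\}$ equals $(t'-t)$ times a smooth function — to write $\widetilde{f}_j(t',t) = (t'-t)\,g_j(t',t)$ with $g_j$ smooth and $g_j(t,t) = \widetilde{f}_j^{(1,0)}(t,t)$; running the same induction with these extra $(t'-t)$ factors yields one additional power per $f_j$ on top of the $n-1$ powers from the integrations, so $d_n = n + (n-1) = 2n-1$.

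Granting the factorization, \eqref{DiagDeriv} follows from the Leibniz rule: in the two-variable expansion of $\partial_{t'}^q\partial_t^r\big[(t'-t)^{d_n}\chi_n\big]$ every summand carries a factor $\partial_{t'}^a\partial_t^b(t'-t)^{d_n}$ with $a + b \le q + r$, and $\partial_{t'}^a\partial_t^b(t'-t)^{d_n}\big|_{t'=t} = 0$ whenever $a + b < d_n$; hence $\widetilde{F}_n^{(q,r)}(t,t) = 0$ for $q + r \le d_n - 1$. When $q + r = d_n$ only the term with all derivatives hitting $(t'-t)^{d_n}$ survives, giving $\widetilde{F}_n^{(q,r)}(t,t) = (-1)^r d_n!\,\chi_n(t,t)$. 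Reading off $\chi_n(t,t)$ from the rescaled iterated integral, in the general case $\chi_n(t,t) = \big(\prod_{j} \widetilde{f}_j(t,t)\big)\,\mathrm{vol}(\Delta_{n-1})$ with $\Delta_{n-1} = \{0 \le s_1 \le \cdots \le s_{n-1} \le 1\}$ of volume $1/(n-1)!$, so $\widetilde{F}_n^{(q,r)}(t,t) = (-1)^r\,\widetilde{f}_n(t,t)\cdots\widetilde{f}_1(t,t)$ for $q + r = n-1$, which in particular gives $\widetilde{F}_n^{(n-1,0)}(t,t) = (-1)^{n-1}\widetilde{F}_n^{(0,n-1)}(t,t)$; in the special case the analogous computation involves $\int_{\Delta_{n-1}}\prod_{j=1}^n (s_j - s_{j-1})\, \text{d}s_1\cdots\text{d}s_{n-1} = 1/(2n-1)!$ (conventions $s_0 = 0$, $s_n = 1$) and the factors $g_j(t,t) = \widetilde{f}_j^{(1,0)}(t,t)$, giving $\widetilde{F}_n^{(q,r)}(t,t) = (-1)^r\,\widetilde{f}_n^{(1,0)}(t,t)\cdots\widetilde{f}_1^{(1,0)}(t,t)$ for $q + r = 2n-1$. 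These products fail to be identically null as soon as none of their factors does, which yields the non-vanishing assertions.

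The identity \eqref{DeltaAction} then follows from \eqref{DiagDeriv} and the delta-derivative action formulas \eqref{eq:deltader}. By the second identity in \eqref{eq:deltader}, $F_n \ast \delta^{(r)}$ equals $(-1)^r \widetilde{F}_n^{(0,r)}\Theta$ plus a finite sum of $\delta^{(k)}$-terms ($0 \le k \le r-1$) with coefficients proportional to $\widetilde{F}_n^{(0,r-k-1)}(t',t')$; since $q + r \le d_n$ forces $r - 1 \le d_n - 1$, each coefficient vanishes by \eqref{DiagDeriv}, leaving $F_n \ast \delta^{(r)} = (-1)^r \widetilde{F}_n^{(0,r)}\Theta \in \Sm$. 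Applying the first identity in \eqref{eq:deltader} to this piecewise-smooth function, $\delta^{(q)} \ast F_n \ast \delta^{(r)}$ equals $(-1)^r\widetilde{F}_n^{(q,r)}\Theta$ plus $\delta^{(k)}$-terms ($0 \le k \le q-1$) with coefficients proportional to $\widetilde{F}_n^{(q-k-1,r)}(t,t)$, of total order $\le q - 1 + r \le d_n - 1$, which again vanish by \eqref{DiagDeriv}; hence $\delta^{(q)} \ast F_n \ast \delta^{(r)} = (-1)^r \widetilde{F}_n^{(q,r)}\Theta$, valid for $q + r \le d_n$, i.e.\ for $q+r \le n-1$ in general and $q+r\le 2n-1$ in the special case.

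I expect the main effort to lie in stating the factorization correctly and driving it through the induction; the fiddliest point is the power-counting in the case $\widetilde{f}_j(t,t) \equiv 0$, where one must verify that each of the $n$ functions and each of the $n - 1$ convolution integrals contributes exactly one power of $(t'-t)$, and that the constant $\int_{\Delta_{n-1}}\prod_{j=1}^n(s_j - s_{j-1})\, \text{d}s$ multiplying $(-1)^r\prod_j g_j(t,t)$ in the top-order diagonal derivative is strictly positive — clear, since the integrand is nonnegative on $\Delta_{n-1}$ and positive on its interior — so the non-vanishing survives. The general case and the passage from \eqref{DiagDeriv} to \eqref{DeltaAction} are then purely mechanical.
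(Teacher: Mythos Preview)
Your proof is correct and takes a genuinely different route from the paper's. The paper argues by induction inside the $\ast$-algebra: for $F_{n+1}=f_{n+1}\ast F_n$ it rewrites $F_n=\Theta^{\ast q}\ast\big((-1)^r\widetilde{F}_n^{(q,r)}\Theta\big)\ast\Theta^{\ast r}$ using the inductive hypothesis, then applies the lemma to the shorter product $f_{n+1}\ast\Theta^{\ast q}$ to conclude that $\delta^{(q)}\ast F_{n+1}\ast\delta^{(r)}$ is a $\ast$-product of two $\Sm$ functions, hence vanishes on the diagonal. For the special case it observes that $f_j=\Theta\ast\widetilde{f}_j^{(1,0)}\Theta$, so $F_n$ is a $\ast$-product of $2n$ functions in $\Sm$ and the first part applies verbatim.

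Your argument instead extracts the structural content directly: the rescaling $\tau_j=t+s_j(t'-t)$ in the Volterra iterated integral produces the Hadamard factorization $\widetilde{F}_n=(t'-t)^{d_n}\chi_n$ with smooth $\chi_n$, from which \eqref{DiagDeriv} follows by Leibniz and \eqref{DeltaAction} by cancelling the boundary $\delta^{(k)}$-terms in \eqref{eq:deltader}. This is more elementary---it never touches the $\ast$-calculus beyond unrolling the definition---and it yields the explicit diagonal value $\widetilde{F}_n^{(q,r)}(t,t)=(-1)^r\,d_n!\,\chi_n(t,t)$, recovering the product formulas via the simplex and Dirichlet volumes $1/(n-1)!$ and $1/(2n-1)!$. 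The paper's approach, by contrast, stays in the $\ast$-language used throughout and makes the reduction of the special case to the general one a one-liner. One small remark: when you invoke \eqref{DiagDeriv} to kill the coefficients $\widetilde{F}_n^{(0,r-k-1)}(t',t')$ in the expansion of $F_n\ast\delta^{(r)}$, note that \eqref{DiagDeriv} as stated is the evaluation at $(t,t)$; your factorization of course gives the vanishing on the whole diagonal, so the argument is sound, but it is worth saying so explicitly.
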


This Lemma extends naturally to $\ast$-products of functions of $\Sm$ whose smooth part depends on less than two time variables, e.g. $\widetilde{a}(t')\Theta(t'-t)$. 

\begin{proof}
%Although both of the $\widetilde{f}_j(t',t')\not \equiv0$ and $\widetilde{f}_j(t',t')=0$ cases are amenable to similar arguments, we here provide separate proofs for the sake of clarity. 
We proceed by induction on $n$.
%We begin with the situation where $f_j(t,t)$ cannot be assumed to be identically null. 
The base case, at $n=2$, follows from a direct calculation
$$
F_2(t',t)=\big(f_1\ast f_1\big)(t',t)=\int_{t}^{t'}\widetilde{f}_2(t',\tau)\widetilde{f}_1(\tau,t)d\tau~\Theta(t'-t).
$$
Since both $\widetilde{f}_2$ and $\widetilde{f}_1$ are continuous as functions over $I^2$, then the above integral vanishes under the limit $t'\to t$, establishing that $F_2(t,t)\equiv 0$. For the derivatives of $F_2$, we get that $\widetilde{F}_2^{(1,0)}(t,t)=\widetilde{f}_1(t,t)\widetilde{f}_2(t,t)=-\widetilde{F}_2^{(0,1)}(t,t)$ is not identically null whenever neither $\widetilde{f}_1(t,t)$ nor $\widetilde{f}_2(t,t)$ are identically null.\\[-.5em]

%Regarding the derivatives of $F_2$, we get
%\begin{align*}
%\widetilde{F}^(1,0)_2(t,t)=\widetilde{f}_1(t,t)\widetilde{f}_2(t,t),
%\widetilde{F}^(0,1)_2(t,t)=-\widetilde{f}_1(t,t)\widetilde{f}_2(t,t),
%\end{align*}
%so that both $\widetilde{F}^(1,0)_2$ and $\widetilde{F}^(0,1)_2$ are not identically null if and only if both $f_1(t,t)$ %and $f_2(t,t)$ are not identically null. 

Now, assume that the Lemma holds for every $\ast$-product of $n$ functions in $\Sm$ and let $F_n(t',t):=\widetilde{F}_n(t',t) \Theta(t'-t)=f_n\ast f_{n-1}\ast\cdots \ast f_1$ with $n\geq 2$. 
We will establish the Lemma by proving that this implies the required properties for $F_{n+1}$.

We get
$ \delta^{(q)}\ast F_{n+1} \ast \delta^{(r)} = \delta^{(q)}\ast f_{n+1}\ast F_n \ast \delta^{(r)}$.
For $0 \leq q+r \leq n-1$, by the inductive assumption and Eq.~\eqref{eq:deltader} we get
$$ \delta^{(q)}\ast F_n \ast \delta^{(r)} = (-1)^r\widetilde{F}_n^{(q,r)}\Theta.$$
Thus
$$ F_n = \Theta^{*q} \ast (-1)^r\widetilde{F}_n^{(q,r)}\Theta \ast \Theta^{*r}.$$
Therefore
$$ \delta^{(q)}\ast f_{n+1}\ast F_n \ast \delta^{(r)} = \delta^{(q)}\ast \left(f_{n+1} \ast \Theta^{*q}\right) \ast (-1)^r\widetilde{F}_n^{(q,r)}\Theta. $$
Since $q+1 \leq n$, the Lemma holds for $f_{n+1} \ast \Theta^{*q}$, giving $\delta^{(q)}\ast \left(f_{n+1} \ast \Theta^{*q}\right) = G_q \in \Sm$.
Finally we get
$$ \delta^{(q)}\ast f_{n+1}\ast F_n \ast \delta^{(r)} = \widetilde{G}_q\Theta \ast (-1)^r\widetilde{F}_n^{(q,r)}\Theta, $$
which is a $\ast$-product of two functions in $\Sm$.
Hence, as we have already proved in the base case, 
$$ (-1)^r\widetilde{F}_{n+1}^{(q,r)}(t,t) = \big(\delta^{(q)}\ast f_{n+1}\ast F_n \ast \delta^{(r)}\big)\big|_{t'=t} \equiv 0, \quad 0 \leq q+r \leq n-1,$$
from which we get
\begin{equation*}
\big(\delta^{(q)}\ast F_{n+1}\ast \delta^{(r)}\big)(t',t)=(-1)^r\widetilde{F}_{n+1}^{(q,r)}(t',t)\,\Theta(t'-t), \quad 0 \leq q+r \leq n.
\end{equation*}
\smallskip

There remains to establish that $\widetilde{F}_{n+1}^{(q,r)}(t,t)$ is not identically null for $q+r=n$ if none of the $f_n(t,t)\equiv 0$. This follows from the observation that since 
%$F_{n+1}=f_{n+1}\ast F_n$, then when $q+r=n+1$, $\widetilde{F}_{n+1}^{(q,r)}(t,t)=f_{n+1}(t,t)\widetilde{F}_{n}^{(q-1,r)}(t,t)=-f_{n+1}(t,t)\widetilde{F}_{n}^{(q,r-1)}(t,t)$. 
 $G_q(t,t) = \widetilde{f}_{n+1}(t,t)$ for every $q \geq 0$, then given $q+r=n$
$$ \widetilde{F}_{n+1}^{(q,r)}(t,t)= \left( \delta'\ast  \widetilde{G}_{q-1}\Theta \ast (-1)^r\widetilde{F}_n^{(q-1,r)}\Theta \right)\Big|_{t'=t} = (-1)^r \widetilde{f}_{n+1}(t,t) \widetilde{F}_n^{(q,r)}(t,t), $$
and, similarly, we get
$$ \widetilde{F}_{n+1}^{(q,r)}(t',t')= \left( \widetilde{G}_q\Theta \ast (-1)^{r-1}\widetilde{F}_n^{(q,r-1)}\Theta \ast \delta' \right)\Big|_{t=t'} = (-1)^{r} \widetilde{f}_{n+1}(t',t') \widetilde{F}_n^{(q,r)}(t',t'). $$
Hence, for $q+r=n$, $\widetilde{F}_{n+1}^{(q,r)}(t',t')$ is not identically null since $f_{n+1}(t,t)\not \equiv0$ and $\widetilde{F}_{n}^{(q,r-1)}(t,t)\not \equiv 0$ by assumption and induction, respectively. The same argument establishes that $\widetilde{F}_{n+1}^{(n+1,0)}(t,t)=\prod_{j=1}^{n+1}\widetilde{f}_j(t,t)$.
This gives the first part of the Lemma.\\[-.5em]

Assuming $f_j(t,t)$ identically null for $j=1,\dots,n+1$, by Eq.~\eqref{eq:deltader} we get 
$$f_j(t',t) = \Theta(t'-t) \ast \widetilde{f}_j^{(1,0)}(t',t), \quad j=1,\dots,n+1.$$
Hence $F_{n+1} = \Theta \ast f_1 \ast \Theta \ast f_2 \ast \cdots \ast \Theta \ast f_{n+1}$ is a $\ast$-product of $2n+2$ functions in $\Sm$. 
Applying the first part of the Lemma to such a $\ast$-product, we conclude the proof.
\end{proof}

\smallskip

\begin{lemma}\label{InverseFormF}
Let $f(t',t):=\widetilde{f}(t',t)\Theta(t'-t)$ with $\widetilde{f}$ smooth. Let $k\in\mathbb{N}$ be the smallest integer such that $\widetilde{f}^{(k,0)}(t,t)$  and $\widetilde{f}^{(0,k)}(t,t)$ are not identically null. Then the $\ast$-inverse of $f$ exists almost everywhere on $I^2$ and is given by 
\begin{align*}
f^{\ast-1} =f^L_{\text{inv}}\ast \delta^{(k+2)}=\delta^{(k+2)}\ast f_{inv}^R.%=\delta^{(k+1)}\ast \left(\frac{1}{\widetilde{f}^{(k,0)}(t,t)}\,\delta + r(t',t)\right),
\end{align*}
Both $\widetilde{f}^{L,R}_{inv}(t,t)$ are not identically null and
%$r(t',t)\in\Sm$, and $f_{inv}$ and $r$ are given by
\begin{align*}
f^L_{\text{inv}}&:=\big(\widetilde{f}^{(k,0)}(t',t')\big)^{-1}\Theta+\int_{t}^{t'}\widetilde{r}^L(t',\tau)d\tau~\Theta,\\
r^L(t',t)&:=\widetilde{r}^L(t',t)\Theta=\frac{1}{\widetilde{f}^{(k,0)}(t,t)}\sum_{n=1}^\infty (-g^L)^{\ast n},\\
g^L(t',t)&:=\sum_{m=1}^{\infty}\widetilde{f}^{(m+k,0)}(t,t) \Theta^{\ast m}.
\end{align*}
while
\begin{align*}
f^R_{\text{inv}}&:=\big(\widetilde{f}^{(0,k)}(t,t)\big)^{-1}\Theta+\int_{t}^{t'}\widetilde{r}^R(\tau,t)d\tau~\Theta,\\
r^R(t',t)&:=\widetilde{r}^R(t',t)\Theta=\frac{1}{\widetilde{f}^{(0,k)}(t',t')}\sum_{n=1}^\infty (-g^R)^{\ast n},\\
g^R(t',t)&:=\sum_{m=1}^{\infty}\widetilde{f}^{(0,m+k)}(t',t')(-1)^{m+k} \Theta^{\ast m}.
\end{align*}
In addition, if $\widetilde{f}^{(k,0)}(t,t)$ and $\widetilde{f}^{(0,k)}(t',t')$ are nonzero on $I$, then $f^{L,R}_\text{inv}\in \Sm(I)$.
\end{lemma}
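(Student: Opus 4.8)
The plan is to establish the $\ast$-inverse of $f(t',t)=\widetilde f(t',t)\Theta(t'-t)$ by first reducing the problem to inverting a function in $\Sm$ whose diagonal restriction is nonzero, and then constructing that inverse as a $\ast$-Neumann series. The key observation is that, since $k$ is the smallest integer for which $\widetilde f^{(k,0)}(t,t)\not\equiv 0$, a Taylor expansion of $\widetilde f$ in $t'-t$ about the diagonal gives $f = \widetilde f^{(k,0)}(t,t)\,\Theta^{\ast k+1} + (\text{higher }\ast\text{-powers of }\Theta)$, i.e. $f$ factors as $f = h \ast \Theta^{\ast k+1}$ for some $h\in\Sm$ with $\widetilde h(t,t)=\widetilde f^{(k,0)}(t,t)\not\equiv 0$. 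Since $\Theta \ast \delta' = \delta = 1_\ast$, we have $\Theta^{\ast k+1}\ast \delta^{(k+1)} = 1_\ast$ and hence, formally, $f^{\ast -1} = \delta^{(k+1)}\ast h^{\ast -1}$ — but the statement asserts $\delta^{(k+2)}$, which comes from writing $h^{\ast-1}$ itself in the normalized form below and absorbing one extra $\Theta$. Concretely I would write $h = \widetilde h(t,t)\Theta \ast(1_\ast + g^L)$ for an appropriate $g^L\in\Sm$ (the tail of the Taylor series, divided by $\widetilde f^{(k,0)}(t,t)$), so that $h^{\ast-1} = (1_\ast+g^L)^{\ast-1}\ast \big(\widetilde h(t,t)\Theta\big)^{\ast-1}$.

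The two ingredients then are: (i) showing $(1_\ast+g^L)^{\ast-1} = 1_\ast + \sum_{n\ge 1}(-g^L)^{\ast n}$ converges in $\D$, and (ii) identifying $\big(\widetilde c(t)\,\Theta(t'-t)\big)^{\ast-1}$. For (i), since $g^L\in\Sm$ one uses the closure of $\Sm$ under $\ast$ together with the bound that $g^{L\,\ast n}$ is supported on $t'\ge t$ and its smooth part is $O((t'-t)^{n-1}/(n-1)!)$ on compact subsets of $I^2$ — exactly the standard Volterra/Peano argument that makes Neumann series for Volterra composition converge locally uniformly; this yields an element $r^L\in\Sm$ (locally, on the set where $\widetilde f^{(k,0)}\ne 0$), matching the stated $r^L$. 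For (ii), a direct computation from \eqref{eq:deltader} shows $\big(\widetilde c(t)\Theta\big)\ast\big(\widetilde c(t)^{-1}\Theta \ast \delta'\big) = \widetilde c(t)\widetilde c(t)^{-1}\Theta\ast\delta' = 1_\ast$ wherever $\widetilde c$ is nonzero, so $\big(\widetilde c(t)\Theta\big)^{\ast-1} = \widetilde c(t)^{-1}\,\delta' + (\text{terms involving }\delta)$; more cleanly one uses the identities \eqref{RelDeltas} to move the smooth coefficient across the delta. Assembling, $f^{\ast-1} = (1_\ast+g^L)^{\ast-1}\ast \widetilde f^{(k,0)}(t,t)^{-1}\Theta \ast \delta^{(k+2)}$, and expanding $(1_\ast+g^L)^{\ast-1}\ast\widetilde f^{(k,0)}(t,t)^{-1}\Theta$ into $\widetilde f^{(k,0)}(t',t')^{-1}\Theta + \int_t^{t'}\widetilde r^L(t',\tau)\,d\tau\,\Theta$ gives precisely $f^L_{\text{inv}}$. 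The right-handed version is symmetric, expanding in $t-t'$ instead and using the other identity in \eqref{RelDeltas}; that $f^L_{\text{inv}}\ast\delta^{(k+2)} = \delta^{(k+2)}\ast f^R_{\text{inv}}$ follows from uniqueness of the two-sided $\ast$-inverse in the associative algebra $\D$.

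For the "almost everywhere" qualifier: the construction requires dividing by $\widetilde f^{(k,0)}(t,t)$ (resp. $\widetilde f^{(0,k)}(t',t')$), which is a smooth, not-identically-zero function of one variable, hence nonzero on a dense open subset of $I$; on that subset the Neumann series converges and $f^L_{\text{inv}},f^R_{\text{inv}}\in\Sm$ locally, giving a genuine $\ast$-inverse there, and when $\widetilde f^{(k,0)}$ is nonvanishing on all of $I$ the series converges on all of $I^2$ and one gets $f^{L,R}_{\text{inv}}\in\Sm(I)$ globally. That $\widetilde f^{L,R}_{\text{inv}}(t,t)$ are not identically null is immediate from the leading term $\widetilde f^{(k,0)}(t,t)^{-1}$ (resp. $\widetilde f^{(0,k)}(t,t)^{-1}$), which is nonzero precisely where the construction is valid.

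The main obstacle I anticipate is bookkeeping the delta-derivative order: one must verify that the factorization $f = h\ast\Theta^{\ast k+1}$ with $\widetilde h(t,t)\not\equiv 0$ is correct (this uses that $k$ is minimal for \emph{both} $\widetilde f^{(k,0)}(t,t)$ and $\widetilde f^{(0,k)}(t,t)$ — one should check these two minimal indices coincide, which follows from Lemma~\ref{ZerosDelta} applied to $f=\widetilde f\,\Theta$ viewed trivially, or from the symmetry $\widetilde F_n^{(n-1,0)}(t,t)=(-1)^{n-1}\widetilde F_n^{(0,n-1)}(t,t)$-type relation), and then that peeling off one further $\Theta$ to normalize the constant-coefficient factor raises $\delta^{(k+1)}$ to $\delta^{(k+2)}$ exactly as stated. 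The convergence estimate in step (i) is routine Volterra theory and the delta manipulations are covered by \eqref{eq:deltader}–\eqref{RelDeltas}, so the real care is in the algebraic normalization and in tracking where the division by a one-variable smooth function forces the "almost everywhere" restriction.
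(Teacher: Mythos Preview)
Your approach is essentially the paper's: Taylor-expand $\widetilde f$ about the diagonal to peel off a $\ast$-power of $\Theta$, invert the remaining factor via a Volterra/Neumann series, and then absorb one additional $\Theta$ into a $\delta'$ to bring the order up to $k+2$. The paper organises the algebra slightly differently: it factors $f=\Theta^{\ast k+1}\ast\bigl(\widetilde f^{(k,0)}(t,t)\,\delta+g\bigr)$ and inverts the mixed distribution $\widetilde f_t\delta+g$ in one shot by the fixed-point iteration $r=\widetilde f_{t'}^{-1}\delta-\widetilde f_{t'}^{-1}(g\ast r)$, whereas you split $h$ further into $\widetilde c\,\Theta\ast(1_\ast+g^L)$ and invert each piece separately; both routes land on the same Neumann series and the same normalisation $\widetilde f_t^{-1}\delta=\widetilde f_{t'}^{-1}\Theta\ast\delta'$.

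One genuine slip to fix when you write it up: with the $(n,0)$-Taylor coefficients (functions of $t$ alone) the factor $\Theta^{\ast\bullet}$ must come out on the \emph{left}, not the right, because $c(t)\,\Theta^{\ast n+1}=\Theta^{\ast n+1}\ast\bigl(c(t)\delta\bigr)$ while $c(t)\,\Theta^{\ast n+1}\neq\bigl(c(t)\delta\bigr)\ast\Theta^{\ast n+1}$ in general. Concretely, from \eqref{eq:deltader} one has $\delta^{(k)}\ast f=\widetilde f^{(k,0)}(t',t)\Theta\in\Sm$ (the lower boundary terms vanish by minimality of $k$), so the clean factorisation is $f=\Theta^{\ast k}\ast h$ with $h\in\Sm$ and $\widetilde h(t,t)=\widetilde f^{(k,0)}(t,t)$; inverting $h$ contributes the remaining $\delta^{(2)}$ to give $\delta^{(k+2)}$. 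Your ``$f=h\ast\Theta^{\ast k+1}$'' conflates the left and right expansions and is off by one in the $\Theta$-power --- exactly the bookkeeping hazard you anticipated --- but once corrected the argument goes through and matches the paper's.
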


\begin{proof}
Because $\widetilde{f}$ is smooth in both time variables, we can appeal to the Taylor series representation 
\begin{align*}
f(t',t)&=\sum_{n\geq k}\widetilde{f}^{(n,0)}(t,t) \frac{(t'-t)^n}{n!}\Theta,\\
&=\sum_{n\geq k}\widetilde{f}^{(n,0)}(t,t) \Theta^{\ast n+1},\\
&=\Theta^{\ast k+1}\ast \left(\widetilde{f}^{(k,0)}(t,t)\delta+\sum_{m=1}^{\infty}\widetilde{f}^{(m+k,0)}(t,t) \Theta^{\ast m} \right),\\
&=\Theta^{\ast k+1}\ast \left(\widetilde{f}^{(k,0)}(t,t)\delta+\widetilde{g}(t',t)\Theta\right),
\end{align*}
with 
$$
g(t',t):=\sum_{m=1}^{\infty}\widetilde{f}^{(m+k,0)}(t,t) \Theta^{\ast m}=\sum_{m=1} \widetilde{f}^{(m+k,0)}(t,t) \frac{(t'-t)^{m-1}}{(m-1)!}\,\Theta.
$$
The inverse of $f(t',t)$ will therefore be of the form $ \left(\widetilde{f}^{(k,0)}(t,t)\delta+g(t',t)\right)^{\ast-1}\ast \delta^{(k+1)}$, provided the inverse of $\widetilde{f}^{(k,0)}(t,t)\delta+\widetilde{g}(t',t)\Theta$ does indeed exist. 

In order to alleviate the notation, let $\widetilde{f}_t$ designate $\widetilde{f}^{(k,0)}(t,t)$. Let us suppose that the $\ast$-inverse $r$ of $\widetilde{f}_t \delta+g$ exists. Then it should satisfy
$
(\widetilde{f}_t \delta + g)\ast r =\delta.$
Expanding this out with the help of Eq.(\ref{RelDeltas}), we get $\delta- g\ast r =f_{t'} r$, that is $f_{t'}^{-1}\delta - f_{t'}^{-1} (g\ast r)=r$. Iteratively replacing $r$ on the left-hand side by its value as given by the right-hand side we get,
\begin{align*}
r=\widetilde{f}^{-1}_{t'}\delta - g\ast (\widetilde{f}^{-1}_{t'}\delta)+ g\ast g \ast (\widetilde{f}^{-1}_{t'}\delta) - \cdots &= \sum_{n=0}^\infty (-g)^{\ast n} \ast (\widetilde{f}^{-1}_{t'}\delta),\\
&=\widetilde{f}^{-1}_t\sum_{n=0}^\infty (-g)^{\ast n}
\end{align*}
Given the form of $g$ and $\widetilde{f}$ being smooth, $\widetilde{g}$ is bounded and the series above is convergent, except possibly at a countably finite number of points $t\in I$ for which $\widetilde{f}^{(k,0)}(t,t)=0$.
Therefore $r$ exists with,
$$
r=f_t^{-1} \delta + \widetilde{r}(t',t)\Theta,~\text{ with }~\widetilde{r}(t',t)\Theta:=\widetilde{f}^{-1}_t\sum_{n=1}^\infty (-g)^{\ast n}.
$$

Returning to the $\ast$-inverse of $f$ we have thus proven that it exists and takes on the form, 
$$
f^{\ast-1}= \left(\frac{1}{\widetilde{f}_t}\delta + \widetilde{r}(t',t)\Theta\right)\ast \delta^{(k+1)},
$$
where $\widetilde{r}$ is an ordinary function. Now it suffices to observe that 
\begin{align*}
\frac{1}{\widetilde{f}^{(k,0)}(t',t')}\Theta\ast \delta'\equiv\widetilde{f}_{t'}^{-1}\Theta\ast \delta' =\widetilde{f}_{t'}^{-1} \delta=\widetilde{f}_{t}^{-1} \delta,
\end{align*}
where the last equality follows from Eq.(\ref{RelDeltas}) with $j=0$. Furthermore
$\int_{t}^{t'}\widetilde{r}(t',\tau)d\tau~\Theta \ast \delta'=\widetilde{r}(t',t)\Theta$. Therefore, 
%$\widetilde{f}_t=\widetilde{f}^{(k,0)}(t,t)$ depends only on $t$, and for the same reason $\delta'\ast \widetilde{r}^{(n,0)}(t,t)\Theta^{\ast n+2}=\widetilde{r}^{(n,0)}(t,t)\Theta^{\ast n+1}$ while $\widetilde{r}(t',t)=\sum_{n\geq 0} \widetilde{r}^{(n,0)}(t,t)\Theta^{\ast n+1}$. Then
\begin{equation}\label{finvForm}
f^{\ast-1}=\left(\widetilde{f}_{t'}^{-1}\Theta+\int_{t}^{t'}\widetilde{r}(t',\tau)d\tau~\Theta\right)\ast \delta^{(k+2)},
\end{equation}
and the content of the parenthesis is $f^L_{inv}$.
There remains to show that $f_{inv}(t,t)$ is not identically null. To this end, remark that as $\widetilde{r}$ is smooth in both time variables, the integral from $t$ to $t'$ of $\widetilde{r}(t',\tau)$ vanishes under the limit $t'\to t$. Given that here $\Theta(0)=1$, there remains 
$$
f^L_{inv}(t,t)=\widetilde{f}_t^{-1}.
$$
which is not identically null, by assumption. The proof for $f_{inv}^R$ is entirely similar, with the starting Taylor expansion being around $t',t'$ instead of $t,t$. This establishes the Lemma. 
\end{proof}
~\\

\subsection{Proof of Lemma~\ref{lemma:breakdown}}\label{sec:breakdown}
Note that $\beta_j = \Theta \ast \widetilde{\beta}_j^{(1,0)}\Theta$ since $\beta_j(t,t)\equiv 0$, for $j=1,\dots, n-1$.
Considering that $\big(\mathsf{T}_{n}^{\ast j+1}\big)(t',t)_{11}$ can be written as a sum of $\ast$-products of $j+1$ functions among $\alpha_0,\dots, \alpha_{n-1}$, $\beta_1^{(1,0)},\dots, \beta_{n-1}^{(1,0)}$ and $\Theta$, Lemma~\ref{ZerosDelta} gives
\begin{equation}\label{eq:derT}
    \frac{\partial^{j}}{\partial t^{j}}\big(\mathsf{T}_{n}^{\ast j+1}\big)(t',t)_{11}\big|_{t'=t} = \left(\mathsf{J}_{n}^{j+1}\right)(t,t)_{11}, \quad j = 0, 1, 2 \dots \, , 
\end{equation}
with
\begin{equation*}
    \mathsf{J}_{n}(t) := \begin{bmatrix} 
            \widetilde{\alpha}_0(t,t) & 1 &          & \\ 
            \widetilde{\beta}_1^{(1,0)}(t,t)  & \widetilde{\alpha}_1(t,t) & \ddots & \\ 
                     & \ddots   & \ddots & 1 \\
                     &           & \widetilde{\beta}_{n-1}^{(1,0)}(t,t) & \widetilde{\alpha}_{n-1}(t,t)
          \end{bmatrix}.
\end{equation*}
Moreover, $\b{w}^H \mathsf{A}^{\ast j+1} \b{v}$ is a sum of $\ast$-products of $j+1$ functions in $\Sm$. Hence by Lemma~\ref{ZerosDelta}
\begin{equation}\label{eq:derA}
    \frac{\partial^{j}}{\partial t^{j}}\big(\b{w}^H \mathsf{A}^{\ast j+1} \b{v}\big)(t',t)\big|_{t'=t} = \b{w}^H \widetilde{\mathsf{A}}^{j}(t)\b{v}, \quad j = 0, 1, 2 \dots \, ,
\end{equation}
Here, notice that ordinary matrix powers appear on the right hand side and not $\ast$-powers anymore.
Then, Theorem~\ref{thm:mmp} implies
\begin{equation}\label{eq:usualLan}
\b{w}^H \widetilde{\mathsf{A}}^{j}(t)\b{v} = \mathsf{J}_{n}^{j}(t)_{11}, \quad j=0,\dots, 2n-1.
\end{equation}
Let us fix $\rho \in I$. The following statements are equivalent (see, e.g., \cite{Par92,PozPraStr18,PozPra19}):
\begin{itemize}
    \item The (usual) non-Hermitian Lanczos process with inputs $\widetilde{\mathsf{A}}(\rho)$, $\b{w}, \b{v}$ generates an $n \times n$ (time-independent) tridiagonal matrix $\mathsf{S}_{n,\rho}$ with nonzero elements on the off-diagonals;
    \item The (usual) non-Hermitian Lanczos process with inputs $\widetilde{\mathsf{A}}(\rho)$, $\b{w}, \b{v}$ does not have a (true) breakdown in the first $n-1$ iterations;
    \item There exists a $n \times n$ (time-independent) tridiagonal matrix with nonzero elements on the off-diagonal ${\mathsf{H}}_{n,\rho}$ satisfying
 \begin{equation*}
\b{w}^H \left(\widetilde{\mathsf{A}}(\rho)\right)^{j}\b{v} = \b{e}_1^T\, \left({\mathsf{H}}_{n,\rho}\right)^{j}\, \b{e}_1, \quad j=0,\dots, 2n-1.
\end{equation*}
\end{itemize}
In particular, every such ${\mathsf{H}}_{n,\rho}$ is in the form 
$${\mathsf{H}}_{n,\rho} = {\mathsf{D}}_\rho \, {\mathsf{S}}_{n,\rho} \, {\mathsf{D}}_{\rho}^{-1},$$ 
with ${\mathsf{D}}_\rho$ a non-singular diagonal matrix. 
Therefore if for a fixed $\rho \in I$ the coefficients $\widetilde{\beta}_1^{(1,0)}(\rho,\rho), \dots, \widetilde{\beta}_{n-1}^{(1,0)}(\rho,\rho)$ are nonzero, then by Eq.~\eqref{eq:usualLan} the non-Hermitian Lanczos process on $\widetilde{\mathsf{A}}(\rho), \b{w}, \b{v}$ does not have a (true) breakdown in the first $n-1$ iterations and give as an output a tridiagonal matrix $\mathsf{S}_{n,\rho}$ so that 
$$\mathsf{J}_n(\rho) = {\mathsf{D}}_\rho \, \mathsf{S}_{n,\rho} {\mathsf{D}}_\rho^{-1},$$
with ${\mathsf{D}}_\rho$ a nonsingular diagonal matrix.
 Conversely, if for a fixed $\rho \in I$ the non-Hermitian Lanczos process on $\widetilde{\mathsf{A}}(\rho), \b{w}, \b{v}$ has not a (true) breakdown in the first $n-1$ iterations, then it generates a tridiagonal matrix ${\mathsf{S}}_{n,\rho}$ with nonzero elements in the off-diagonal. Therefore since ${\mathsf{J}}_n(\rho) = {\mathsf{D}}_\rho \, {\mathsf{S}}_{n,\rho} {\mathsf{D}}^{-1}_\rho$ with ${\mathsf{D}}_\rho$ a non-singular diagonal matrix, the coefficients $\widetilde{\beta}_1^{(1,0)}(\rho,\rho), \dots, \widetilde{\beta}_{n-1}^{(1,0)}(\rho,\rho)$ are nonzero.
  Being $\widetilde{\beta}_1^{(1,0)}(t,t), \dots, \widetilde{\beta}_{n-1}^{(1,0)}(t,t)$ smooth functions of $t \in I$, they are either identically null on $I$ or nonzero almost everywhere on $I$, showing that Statement \ref{stat:beta1} is equivalent to Statement \ref{stat:lanc}.
 
 By similar arguments, Statement \ref{stat:beta2} is equivalent to Statement \ref{stat:lanc}, concluding the proof.

\subsection{Proof of Theorem~\ref{MainTheorem}}\label{MainArg}
We are now ready to prove Theorem~\ref{MainTheorem}. We begin with proving the Theorem's statements concerning the off-diagonal coefficients $\beta_j$. For all integers $1\leq n\leq N-1$, we denote $\mathfrak{P}_n$  the proposition:\\[-.7em]

$\mathfrak{P}_n:=$``For all $1\leq j\leq n$, $\beta_j\in \Sm$, $\beta_j(t',t')\equiv 0$ is identically null and neither $\beta_j^{(1,0)}(t',t')$ nor $\beta_j^{(0,1)}(t,t)$ are identically null.''\\[-.7em] 

We establish $\mathfrak{P}_n$ by induction.

\begin{proof}[Proof for the coefficients $\beta$]                   
% Let $\mathfrak{P}_n$ be the proposition that all statements of the Theorem concerning the $\beta_j$ coefficients are true up to $j=n$, $n\leq N-1$.         
% We will proceed by induction on $n$. Proof for the $\alpha_j$ coefficients will be made separately.\\[-.5em]
Observe that the $j$th $\ast$-moment of the matrix $\mathsf{A}$ satisfies $m_j(t',t) := \b{w}^H \mathsf{A}^{\ast j} \b{v}\in \Sm$ for $j\in\mathbb{N}$. Since by definition $\alpha_0(t',t) = \b{w}^H \mathsf{A} \b{v}$, $\alpha_0\in \Sm$ and 
\begin{equation*}
    \beta_1(t',t) =  m_2(t',t) - \alpha_0^{\ast 2}(t',t),
\end{equation*}
then $\beta_1\in \Sm$.
In addition, the $\ast$-product of two elements of $\Sm$ is null whenever $t'=t$ owing to the continuity of the integrand, and thus we immediately get $m_2(t,t)=\alpha_0^{\ast 2}(t,t)\equiv 0$ entailing that $\beta_{1}(t,t)\equiv 0$. 
Finally, we get $\widetilde{\beta}_1^{(1,0)}(t',t'), \widetilde{\beta}_1^{(0,1)}(t',t')$ not identically null by Lemma~\ref{lemma:breakdown}.

%Finally, a direct calculation shows that
%\begin{equation}\label{beta1der}
%\beta_1^{(1,0)}(t',t')=-\beta_1^{(0,1)}(t',t')=\b{w}^H %\widetilde{\mathsf{A}}^{2}(t') \b{v}-\big(\b{w}^H \widetilde{\mathsf{A}}(t') %\b{v}\big)^2.
%\end{equation}

%For each $t' \in I$, we run one step of the usual non-Hermitian Lanczos process with inputs $\widetilde{\mathsf{A}}(t'), \b{w}, \b{v}$. 
%Defining the (usual) moments $\mu_0 = \b{w}^*\b{v}=1$, $\mu_1 = \b{w}^H \widetilde{\mathsf{A}}(t') \b{v}$ and $\mu_2 = \b{w}^H \widetilde{\mathsf{A}}^{2}(t') \b{v}$
%a breakdown occurs at the first iteration if and only the related Hankel determinant is zero, i.e.,
%$$
%\Delta_1 = \textrm{det}
%\begin{bmatrix}
%      m_0 & m_1 \\
%      m_1 & m_2
%\end{bmatrix} = 0;
%$$
%see, e.g., \cite{PozStr18, PozPra19} \note{altre ref ??}.
%Clearly, $\Delta_1 = 0$ if and only if $\widetilde{\beta}_1^{(1,0)}(t',t') = -\widetilde{\beta}_1^{(0,1)}(t',t')= 0$ for every $t' \in I$. Hence by the Theorem assumptions, neither $\widetilde{\beta}_1^{(1,0)}(t',t')$ nor $\widetilde{\beta}_1^{(0,1)}(t',t')$ are identically null.

Assuming $n \geq 1$, the central object of interest is \begin{equation}\label{FnDef}
F_{n+1}(t',t):=m_{2n+2}(t',t)-\big(\mathsf{T}_{n+1}^{\ast 2n+2}\big)(t',t)_{11}.
\end{equation}
 Observe that $m_{2n+2}(t',t)$ is a sum of  $\ast$-products of $2n+2$ functions in $\Sm$.
 Moreover, by the inductive assumption, for $j=1,\dots,n$ we have $\beta_j \in \Sm$ and $\beta_j(t,t)\equiv 0$.
As a consequence, $\beta_j(t',t) = - \widetilde{\beta}_j^{(0,1)}(t',t)\Theta \ast \Theta = \Theta \ast \widetilde{\beta}_j^{(1,0)}(t',t)\Theta$.
Hence $\big(\mathsf{T}_{n+1}^{\ast 2n+2}\big)(t',t)_{11}$ can be written as a sum of $\ast$-products of $2n+2$ functions among $\alpha_0,\dots, \alpha_n$, $\widetilde{\beta}_1^{(0,1)}\Theta,\dots, \widetilde{\beta}_n^{(0,1)}\Theta$ and $\Theta$. Then $F_{n+1}\in\Sm$ and, by Lemma~\ref{ZerosDelta}, for $q+r\leq 2n$,
\begin{equation}\label{Fn1Deriv}
\widetilde{F}_{n+1}^{(q,r)}(t,t)\equiv 0.
\end{equation}
while for $0\leq q+r\leq 2n+1$,
\begin{equation}\label{Fn1Result}
\big(\delta^{(q)}\ast F_{n+1}\ast \delta^{(r)}\big)(t',t)=(-1)^r\widetilde{F}_{n+1}^{(q,r)}(t',t)\,\Theta(t'-t).
\end{equation}
We can further identify $F_{n+1}$ upon noting that since $m_{2n+2}(t',t)=\big(\mathsf{T}_{k}^{\ast 2n+2}\big)_{11}$ whenever $k\geq n+2$. Since
 \begin{equation}\label{BetaProduct}
 \big(\mathsf{T}_{k}^{\ast 2n+2}\big)_{11}-\big(\mathsf{T}_{n+1}^{\ast 2n+2}\big)_{11}=\beta_{n+1}\ast\cdots\ast\beta_2\ast\beta_1,
\end{equation}
we get
 \begin{equation}\label{FormFnBeta}
 F_{n+1}=\beta_{n+1}\ast\cdots\ast\beta_2\ast\beta_1. 
 \end{equation}
 From now on, we suppose that $F_{n+1}(t',t)$ is not identically null over $I^2$. Indeed, should it be the case, then Eq.~(\ref{FormFnBeta}) implies that $\beta_{n+1}(t',t)$ is identically null, which corresponds to a breakdown of the $\ast$-Lanczos algorithm. Lemma~\ref{lemma:breakdown} shows that such a case is in contradiction with the theorem assumptions and in fact corresponds to a breakdown of the ordinary non-Hermitian Lanczos procedure.
 
 In order to determine what kind of distribution is $\beta_{n+1}$, we seek to express it as $\beta_{n+1}= F_{n+1}\ast  F_{n}^{\ast-1}$, where $F_n:=\beta_{n}\ast\cdots\ast\beta_2\ast\beta_1$. To this end, we first need to show the existence of $F_{n}^{\ast-1}$ and precisely control what form this may possibly take.
 We exploit again the fact that $\beta_j(t',t) = \Theta \ast \widetilde{\beta}_j^{(1,0)}(t',t)\Theta$ getting
$$ F_{n}=\Theta \ast \beta_{n}^{(1,0)}\ast\cdots\ast \Theta \ast\beta_2^{(1,0)}\ast \Theta \ast \beta_1^{(1,0)}.$$
Considering that by induction $\beta_j^{(1,0)}(t,t) \not\equiv 0$ for $j=1,\dots,2n-2$, Lemma~\ref{ZerosDelta} gives
$$
\widetilde{F}_n^{(j,0)}(t,t)\equiv 0, \quad j=1,\dots,2n-2, 
$$
and
$$
 \widetilde{F}_n^{(2n-1,0)}(t,t) = \widetilde{\beta}_1^{(1,0)}(t,t)\cdots \widetilde{\beta}_n^{(1,0)}(t,t) \not\equiv 0.
$$
Thus, by Lemma~\ref{InverseFormF}, the $\ast$-inverse of $F_n$ exists and takes on the form 
$$
F_{n}^{\ast-1}=\delta^{(2n+1)}\ast F_{inv}.
 $$
With the further assumption that $\widetilde{\beta}_j^{(1,0)}(t,t)\neq 0$ for every $t \in I$, $j=1,\dots,n$, we get $F_{inv}\in Sm_\Theta(I)$.
We can now return to calculating $\beta_{n+1}$. We start with
\begin{align*}
\beta_{n+1}=F_{n+1}\ast F_n^{\ast-1}=F_{n+1}\ast\delta^{(2n+1)}\ast F_{inv}.%(\widetilde{\Phi}_n \Theta+\widetilde{\Psi}_n \delta).
\end{align*}
By Eq.~(\ref{Fn1Result}), we have $F_{n+1}\ast \delta^{(2n+1)}=\widetilde{F}^{(0,2n+1)}_{n+1}\Theta$ and hence
\begin{align}
\beta_{n+1}&=\big(\widetilde{F}^{(0,2n+1)}_{n+1}\Theta\big)\ast F_{inv},\nonumber\\
&=\int_t^{t'} \widetilde{F}^{(0,2n+1)}_{n+1}(t',\tau)\widetilde{F}_{inv}(\tau,t)d\tau~\Theta,\label{betaFormSimple}
\end{align}
because $F_{inv}\in \Sm$.
This shows that $\beta_{n+1}\in \Sm$ is piecewise smooth. Furthermore, in the limit $t'\to t$, the integral above vanishes since the integrand is smooth, and $\widetilde{F}^{(0,2n)}_{n+1}(t,t)$ is identically null by Eq.~(\ref{Fn1Deriv}), consequently $\beta_{n+1}(t,t)\equiv 0$.
Since neither $\widetilde{\beta}_{n+1}^{(1,0)}(t,t)$ nor $\widetilde{\beta}_{n+1}^{(0,1)}(t',t')$ are identically null by Lemma~\ref{lemma:breakdown}, the proof is concluded.
As a final remark, note that
$\beta_{n+1}^{\ast-1}$ exists and is of the form $\beta_{n+1}^{\ast-1}=\delta^{(3)}\ast b_{n+1}$, with $b_{n+1}\in\Sm$ given explicitly by Lemma~\ref{InverseFormF}.\\ 

These results establish $\mathfrak{P}_n\Rightarrow \mathfrak{P}_{n+1}$ and, since $\mathfrak{P}_1$ holds, $\mathfrak{P}_n$ is true for $n=1,2,\dots,N-1$. 
\end{proof}

\begin{proof}[Proof for the coefficients $\alpha$]
 A completely similar proof works for the $\alpha_j$ coefficients, on invoking  auxiliary matrices $\mathsf{Q}_n$ defined as equal to $\mathsf{T}_n$ but for $\alpha_{n-1}$ set to $0$. Then
$$
G_n(t',t):=m_{2n+1}(t',t)-\big(\mathsf{Q}_{n+1}^{\ast 2n+1}\big)_{11}=\alpha_{n}\ast\beta_{n}\ast\cdots \ast \beta_1
$$
and furthermore $G_n\in \Sm$. Since now $\alpha_n=G_n\ast F_n^{\ast -1}$,
then
\begin{align*}
    \alpha_n=G_n\ast F_n^{\ast -1} 
        &= \left(G_n \ast \delta^{(2n)}\right)\ast \delta' \ast F_{inv} \\
        &= \left(G_n^{(0,2n)}(t',t)\Theta \right)\ast \delta' \ast F_{inv} \\
        &= \left(G_n^{(0,2n+1)}(t',t)\Theta + G_n^{(0,2n)}(t,t)\delta \right)\ast F_{inv}.
\end{align*}
Hence $\alpha_n\in \Sm$, however $\alpha_n(t,t)$ may be not identically null.
\end{proof}

\section{Conclusion}
In this work, we have shown that any systems of coupled linear differential equations \eqref{FundamentalSystem} with smooth coefficients can be transformed into a smooth tridiagonal system, for $t',t \in I$, when the matrix of coefficients $\widetilde{\mathsf{A}}(t')$ is tridiagonalizable in the usual sense for every $t' \in I$.
In particular, baring accidental breakdowns of the $\ast$-Lanczos algorithm, we showed that for any matrix $\mathsf{A}(t')$ composed of smooth functions of $t'$ and for any two vectors $\b{v}$ and $\b{w}$, there exists a \emph{tridiagonal} matrix comprising only piecewise smooth functions and non-essential Dirac delta distributions (Remark~\ref{Rem:nonessential}) whose ordered exponential evaluated between $\b{w}^H$ and $\b{v}$ yields the same result as the ordered exponential of $\mathsf{A}$ evaluated between these two vectors. Moreover, we proved that a sufficient condition for not having a breakdown in the $\ast$-Lanczos algorithm is that the usual non-Hermitian Lanczos algorithm with inputs $\mathsf{A}(t'), \b{w}, \b{v}$ does not breakdown for at least one $t' \in I$. If, however, there exists $\rho$ so that the inputs $\mathsf{A}(\rho), \b{w}, \b{v}$ do produce a breakdown in the usual non-Hermitian Lanczos algorithm, then the smoothness of the resulting tridiagonal matrix is not guaranteed. In this case, the $\ast$-Lanczos algorithm can proceed by restricting the given domain $I$ to a subset excluding $\rho$.
Given the pervasive presence of systems of coupled linear differential equations with non-constant coefficients in applications--for example all closed quantum dynamical systems subjected to time-dependent forces produce such a system--this result provides an essential basis for the  evaluation and understanding of ordered exponentials. Concretely, the proofs provided  here guarantee the existence and good-behavior of a constructive procedure, the $\ast$-Lanczos algorithm, capable of exactly evaluating ordered exponentials in a finite number of steps.

\section*{Acknowledgments}
This work has been supported by Charles University Research program No. UNCE/SCI/023 and by the 2019 ANR JCJC \textsc{Alcohol} project ANR-19-CE40-0006.

\bibliographystyle{amsplain} 
\bibliography{references}

\end{document}